\newtheorem{thm}{Theorem}[section]
\newtheorem{cor}[thm]{Corollary}
\newtheorem{lemma}[thm]{Lemma}
\newtheorem{prop}[thm]{Proposition}
\newtheorem{defn}[thm]{Definition}
\newcommand\newnumbered[2]{\newtheorem{#1}{#2}}
\author{Bernhard G. Bodmann and Ahmed Abouserie}
\date{}
\begin{document}

\title[Spikes, Roots, and Modulations]{Spikes, Roots, and Modulations: Phase Retrieval\\ for Finitely-Supported Complex Measures}
\maketitle

\begin{abstract}
We study the recovery of a finitely supported distribution, a complex linear combination of Dirac measures, from intensity measurements.
The distribution $\mu=\sum_{j=1}^{s}c_{j}\delta_{t_{j}}$ is given by 
a coefficient vector $c\in\mathbb{C}^s$ and
its support $\{t_1, t_2, \dots, t_s\}$ is  contained in $ [0,\Lambda]$ for some $\Lambda>0$. The intensity measurements
evaluate (squared) magnitudes of a set of linear functionals applied to $\mu$, obtained by sampling $\hat \mu$, the Fourier transform of $\mu$, or by evaluating differences between modulated samples. 
Following a strategy by Alexeev et al., the structure of the linear functionals, and hence of the non-linear magnitude measurement, is encoded with a graph, where the vertices represent point evaluations of $\hat \mu$ at $\{v_1, v_2, \dots, v_n\} \subset [-\Omega,\Omega]$ and each edge represents a (modulated) difference
between vertices incident with it.
We show that a Ramanujan graph with degree $d \ge 3$ and 
 $n>\frac{6(1 + 6 /\ln(s/\Lambda\Omega)) s}{1-2\sqrt{d-1}/d}$ vertices provides $M=(d+1)n$
 magnitudes that are sufficient for identifying the complex measure up to an overall unimodular multiplicative constant.
 At the cost of including an additional oversampling step  and with an additional requirement that $n-1$ is prime, we  
construct an explicit recovery algorithm that is based on the Prony method.
\end{abstract}

\section{Introduction} Phase retrieval is an inverse problem with relevance in many fields such as crystallography \cite{DBLP:journals/corr/RanieriCLV13}, medical imaging \cite{yan_wu_liu_2013}, and quantum mechanics \cite{flammia2012quantum,Gross:2013fk,gross17improved}. The problem
is more concretely described as signal recovery when  only  the magnitude of linear measurements are known. 
The real case has been studied extensively and is fairly well understood,
in particular, if the signal of interest is  a vector in a finite dimensional Euclidean space \cite{balan2006signal,alexeev_bandeira_fickus_mixon_2014}.
Necessary conditions for the recovery of the signal were also provided in the complex case\cite{alexeev_bandeira_fickus_mixon_2014},
next to establishing a class of successful generic measurements \cite{ConcaEdidinEtAl:2015}. As demonstrated by Vinzant \cite{vinzant2015}, the smallest number of measured quantities needed for complex phase retrieval is still  an open problem. 

Apart from the characterization of measurement strategies that determine each signal, noise resilience and 
concrete recovery algorithms have been discussed, 
see \cite{fienup_1982} for an overview of algorithms applied in solving the discrete case. 
In the complex case, PhaseLift uses an embedding of the signal in a space of positive semidefinite matrices to formulate
feasible recovery methods for randomized measurements \cite{Candes:2012fk, Candes:uq}. 
An alternative method uses an embedding of complex polynomial spaces in a corresponding space of trigonometric polynomials
\cite{bodmann_hammen_2015,bodmann_hammen_2017}. 
The case of signals in infinite dimensional Hilbert spaces has been shown to provide no possible stable recovery \cite{cahill2016phase}.
However, one may hope that additional support restrictions and sparsity assumptions for the signal may help.

Indeed, in recent years there has been increasing interest in studying the continuous case and more specifically the case of complex-valued distributions. The latter model provides a more realistic description of physical systems, such as the case of X-ray crystallography where we are interested in determining the molecular structure of given samples \cite{DBLP:journals/corr/RanieriCLV13}.
Even when the measurement includes phase information, this is a challenging problem that has also been studied under the name of superresolution
\cite{Donoho92,CandesFernandezGranda2014}.

In \cite{beinert_plonka_2017}, the authors apply the Prony method to this problem to provide a sufficient condition for the recovery 
of a complex linear combination of $s$ Dirac measures,
$$
   \mu = \sum_{j=1}^s c_j \delta_{t_j}
$$
using a number of measured quantities that is $O(s^{2})$. Here, $\mu$ is determined up to an overall unimodular multiplicative constant,
 so instead of the signal, we get $[\mu]= \{c \mu: c \in \mathbb{C}, |c|=1\}$.
Uniqueness conditions exploiting the sparsity assumption were discussed in \cite{DBLP:journals/corr/RanieriCLV13}, where a certain condition (no collision) on these pairwise distances between nodes is shown to be sufficient for recovery of almost every signal of the above form. The key feature in both methods is that we attempt to recover the Fourier transform of the autocorrelation function, either by finding an inverse to a Hankel matrix in the first case, or by solving a system of quadratic equations in the second. 
We observe that the actual number of unknowns needed to determine the signal is only twice the support size and not quadratic in $s$, which motivates the results presented 
here. We wish to determine $\mu$ with a number of measured quantities that is linear in $s$ and provide a concrete recovery method.

This paper has two main contributions:
\begin{itemize}
\item  {\bf Injectivity of  measurements.}
We use the connectivity properties of a graph whose vertices are concrete points $V=\{v_1, v_2, \dots, v_n\}$ 
on the real line together with results from
complex analysis to establish that a measurement of sufficiently many magnitudes of the Fourier transform at these vertices, together with magnitudes of differences after applying a modulation to the Fourier transform, is sufficient to characterize the signal, up to a remaining multiplicative unknown unimodular constant. More precisely,
the first main contribution is formulated as follows:

Let $\Omega, \Lambda >0$, $\Omega \Lambda<1/2$, and $\mu$ be a complex linear combination
of $s$ Dirac measures with support in $[0,\Lambda]$. There is a constant $C>0$ depending on $\Omega \Lambda$ such that
if $d\geq 3$ 
and  $\Gamma$ is a Ramanujan graph with regularity $d$ and at least 
$n > \frac{Cds}{d - 2 \sqrt{d-1}}$ vertices $V=\{v_1, v_2, \dots, v_n\}$ in the set $[-\Omega,\Omega]$, then
there are  $M=(d+1)n$ magnitude measurements associated with the graph
that  determine  $[\mu]$.
For small $\Omega$ and $\Lambda$, the numerical constant $C$ can be chosen close to $6$. The overall number of measured quantities for $d=3$ is then close to $70s$, see Theorem~\ref{thm:main1} for details.
\item {\bf A recovery algorithm.}
The Prony method and an oversampling strategy provides a recovery algorithm that reduces to applying linear inverses and
finding roots of a polynomial.
More precisely, if $n> \frac{Cds}{d - 2 \sqrt{d-1}}$ and $n$ is prime, $d\ge 3$,
and $\Gamma$ is a Ramanujan graph of $n+1$ vertices, then there is a sampling strategy
associated with $\Gamma$ and an iterative phase-propagation algorithm resulting in
 $M=(d+1)(n+1)$ measured quantities that
provides values $\zeta \hat \mu(2k\Omega/(n+1))$ for each $k \in \{1, 2, \dots, (n+1)/2\}$ 
with a remaining residual unknown $\zeta \in \mathbb C$, $|\zeta|=1$. Subsequently applying the Prony algorithm, 
factoring a polynomial and solving a linear system then gives $[\mu]$.
\end{itemize}

The remainder of this paper is organized as follows: In Section~\ref{sec:prelim}, we introduce preliminary concepts and fix notation.
The section on main results is split into the part on injectivity, Section~\ref{sec:inject} and the last part on algorithms, 
Section~\ref{sec:algo}.

\section{Preliminaries}\label{sec:prelim}

The class of signals we study are certain complex linear combinations of Dirac measures on the real line. The simplest case, the (normalized) Dirac measure supported at $t \in \mathbb R$ is denoted by $\delta_t$. We restrict the support to $t \in [0,\Lambda]$.
More generally, $S_{[0,\Lambda],s}$ is the set of such linear combinations having support of maximal size $s \in \mathbb{N}$ contained in the interval $[0,\Lambda]$, formally defined as 
\begin{align*}S_{[0,\Lambda],s}=\Bigl\{&\mu=\sum_{j=1}^{s}c_{j}\delta_{t_{j}}:\, c \in \mathbb{C}^s
\text{ and } 0\leq t_{1}<t_{2}<...<t_{s}\leq\Lambda\Bigr\} \, .
\end{align*}
Let the union of all such sets be $S_{[0,\Lambda]} = \cup_{s=1}^\infty S_{[0,\Lambda],s}$,
and for any measure $\mu = \sum_{j=1}^{s}c_{j}\delta_{t_{j}} \in S_{[0,\Lambda]}$, let  the Fourier transform $\mathcal{F}(\mu)=\hat{\mu}:\mathbb{R}\rightarrow\mathbb{C}$  
be chosen according to the convention so that  its value at $\omega \in \mathbb R$ is
\begin{equation*}\hat{\mu}(\omega)
=\sum_{j=1}^{s}c_{j}e^{-2\pi i\omega t_{j}}.\end{equation*} 

In pursuit of injectivity, we observe that for any linear functional $\phi$ on $S_{[0,\Lambda]}$ and any $c\in \mathbb{C}$ with $|c|=1$, 
we have that $|\phi(c\mu)|^{2}=|\phi(\mu)|^{2}$. Since our measurements are given by such squared magnitude evaluations, our procedure will aim at recovering the equivalence class $[{\mu}]\in S_{[0,\Lambda]}/\sim$\, where $\sim$ is the equivalence relation defined by $\mu \sim \nu$ if and only if $\mu = u \nu$, with $u \in \mathbb C$, $|u|=1$.

The point evaluations of $\hat \mu$ are part of the linear functionals we include in the measurement. However, if
for some $\omega \in \mathbb R$, $\hat \mu(\omega)=0$, then only limited information can be deduced from this.
Fortunately,  we have the following upper bound on the number of roots that $\hat{\mu}$ has in $[-\Omega,\Omega]$.

\begin{lemma}[Theorem 3 in \cite{tijdeman_1971}] Let $\Lambda,\Omega>0$, $s \in \mathbb N$ with $s>\Lambda\Omega$, and $\mu\in S_{[0,\Lambda]}$ having a support of size at most $s$, 
then if $\mu \ne 0$, the number of roots of the Fourier transform $\hat \mu$
in $[-\Omega,\Omega]$ cannot exceed $(1 + 6 /\ln(s/\Lambda\Omega)) s$.
\end{lemma}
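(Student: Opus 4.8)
The plan is to view $\hat\mu$ as the restriction to $\mathbb{R}$ of an entire function of exponential type and to bound the number of its zeros inside a disk containing $[-\Omega,\Omega]$ by dividing out those zeros and applying the maximum modulus principle. Extend $\hat\mu$ to $F(z)=\sum_{j=1}^{s}c_{j}e^{-2\pi i z t_{j}}$, $z\in\mathbb{C}$; this is entire and, since every $t_{j}\in[0,\Lambda]$, satisfies $|F(z)|\le\|c\|_{1}\,e^{2\pi\Lambda|\mathrm{Im}\,z|}$, and every real root of $\hat\mu$ in $[-\Omega,\Omega]$ is a zero of $F$ there. Before starting I would normalize: after scaling assume $\|c\|_{\infty}=1$ (so $\|c\|_{1}\le s$); after discarding vanishing coefficients and translating assume $c_{j}\ne0$ with $0=t_{1}<\dots<t_{s}$; and replace $\Lambda$ by the true support diameter $t_{s}$, which only strengthens the conclusion because it increases $s/(\Lambda\Omega)$ while keeping $s>\Lambda\Omega$.

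Next I would divide out the real zeros: let $a_{1},\dots,a_{N}\in[-\Omega,\Omega]$ be the real zeros of $F$ counted with multiplicity, and set $g(z)=F(z)/\prod_{k=1}^{N}(z-a_{k})$, again entire. Fix $R'>3\Omega$. On $|z|=R'$ one has $\prod_{k}|z-a_{k}|\ge(R'-\Omega)^{N}$, so $|g(z)|\le M_{R'}(R'-\Omega)^{-N}$ with $M_{R'}=\max_{|z|=R'}|F(z)|\le s\,e^{2\pi\Lambda R'}$, and by the maximum modulus principle this persists on the whole disk. Since $|x-a_{k}|\le2\Omega$ for $x\in[-\Omega,\Omega]$,
\begin{equation*}
|F(x)|=|g(x)|\prod_{k=1}^{N}|x-a_{k}|\le M_{R'}\Bigl(\tfrac{2\Omega}{R'-\Omega}\Bigr)^{N},
\end{equation*}
so maximizing over $x$ and writing $\|F\|_{[-\Omega,\Omega]}=\max_{x\in[-\Omega,\Omega]}|F(x)|$ gives
\begin{equation*}
N\le\frac{\log M_{R'}-\log\|F\|_{[-\Omega,\Omega]}}{\log\frac{R'-\Omega}{2\Omega}}\le\frac{2\pi\Lambda R'+\log s-\log\|F\|_{[-\Omega,\Omega]}}{\log\frac{R'-\Omega}{2\Omega}}.
\end{equation*}

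The hard part, and what I expect to be the main obstacle, is a good lower bound for $\|F\|_{[-\Omega,\Omega]}$. Even with $\|c\|_{\infty}=1$ this quantity can be minuscule --- e.g.\ when two exponents nearly coalesce and their coefficients nearly cancel --- so no crude Vandermonde or Turán estimate is uniform in the node configuration. What one needs is a Chebyshev--Turán-type inequality, $\|F\|_{[-\Omega,\Omega]}\ge(c_{0}\Lambda\Omega/s)^{s-1}$ with an explicit absolute constant $c_{0}$; establishing it robustly appears to require first organizing the near-coalescences (passing to a genuinely well-separated system of exponents by a limiting argument, or inducting on $s$) and only then invoking a lower bound for sparse exponential sums on an interval. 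Granting such an estimate, $-\log\|F\|_{[-\Omega,\Omega]}\le(s-1)\log\frac{s}{c_{0}\Lambda\Omega}$.

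Finally I would optimize the free radius. With $t:=(R'-\Omega)/(2\Omega)$ the bound reads $N\le\bigl(4\pi\Lambda\Omega\,t+(s-1)\log\frac{s}{c_{0}\Lambda\Omega}+\log s\bigr)/\log t$; writing $A\approx s\log\frac{s}{c_{0}\Lambda\Omega}$ and $B=4\pi\Lambda\Omega$, the minimization of $(A+Bt)/\log t$ has its optimum at $t(\log t-1)=A/B$, that is $\log t\approx\log\frac{s}{\Lambda\Omega}$ --- positive exactly because $s>\Lambda\Omega$ --- with minimal value $A/(\log t-1)$. Substituting, the bound takes the form $s\bigl(1+(1+\log(1/c_{0}))/(\log\tfrac{s}{\Lambda\Omega}-1)\bigr)$ up to lower-order terms, and if $c_{0}$ is bounded below by a fixed constant so that $1+\log(1/c_{0})\le6$, this collapses to the asserted $(1+6/\ln(s/(\Lambda\Omega)))\,s$. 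Everything thus rests on the lower modulus estimate with a sufficiently good constant; the remaining ingredients are the standard division/maximum-modulus maneuver together with an elementary one-variable optimization.
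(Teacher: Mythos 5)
The paper does not prove this lemma at all: it is quoted verbatim as Theorem 3 of Tijdeman's 1971 paper, so the only thing to check is whether your from-scratch argument actually closes. It does not. The entire proof hinges on the Chebyshev--Tur\'an-type lower bound $\|F\|_{[-\Omega,\Omega]}\ge (c_0\Lambda\Omega/s)^{s-1}$ under the normalization $\|c\|_\infty=1$, which you explicitly ``grant'' rather than prove --- and in fact no such bound, depending only on $s,\Lambda,\Omega$, can exist. Your own example already kills it: take $s=2$, $c_1=1$, $c_2=-1$, $t_2=t_1+\epsilon$; then $\|c\|_\infty=1$ but $|F(x)|=|1-e^{-2\pi i x\epsilon}|\le 2\pi\Omega\epsilon$ on $[-\Omega,\Omega]$, which tends to $0$ as $\epsilon\to 0$ while the right-hand side stays fixed. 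So the missing ingredient is not a technical estimate one could ``establish robustly'' by reorganizing near-coalescences; as stated it is false, and the suggested limiting/induction fix is exactly where all the difficulty lives. On top of this, the final optimization is only carried out ``up to lower-order terms,'' so even granting a corrected lower bound the specific constant $6$ in the statement is never actually derived.

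The structural reason your normalization fails is instructive: counting zeros via Jensen/maximum-modulus needs a quantity that is invariant under multiplying $F$ by a constant and insensitive to cancellations among coefficients. Tijdeman (following Tur\'an's method) therefore does not bound $\|F\|$ from below in terms of the coefficients at all; instead he proves a two-scale comparison of the form $\max_{|z|\le R}|F|\le C(s,R,r,\omega)\,\max_{|z|\le r}|F|$ for exponential polynomials with frequencies of modulus at most $\omega$ (a Tur\'an ``second main theorem''-type inequality), and feeds that ratio into the division/maximum-modulus count. Such a ratio bound is scale-invariant and survives coalescing exponents, which is precisely what your coefficient-normalized lower bound cannot do. If you want a self-contained proof, that comparison inequality is the lemma you must prove; alternatively, the cruder but elementary bound of Tijdeman's Theorem 1 (number of zeros in a disk of radius $R$ at most $3(s-1)+4R\omega$) can be obtained by this route, but recovering the refined $(1+6/\ln(s/\Lambda\Omega))s$ form requires the full argument of the cited paper, not the outline you gave.
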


We formulate a simple consequence for signal recovery from (linear) sampling of the Fourier transform $\hat \mu$. 
Let the sampling set be $\{v_1, v_2, \dots, v_n\} \subset [-\Omega,\Omega]$,
and evaluate the Fourier transform of $\mu$ at these points, so the corresponding linear map is
$\phi_v: S_{[0,\Lambda]} \to \mathbb{C}^n$, $(\phi_v\mu)_j = \hat \mu(v_j)$.\\

\begin{prop}
Let $\Lambda,\Omega>0$, $s \in \mathbb N$, 
$n > (2 + 12 /\ln(2s/\Lambda\Omega)) s$,
then the sampling map $\phi_v$ associated with
$\{v_1, v_2, \dots, v_n\} \subset [-\Omega,\Omega]$,
maps $S_{[0,\Lambda],s}$ injectively to $\mathbb{C}^n$.
\end{prop}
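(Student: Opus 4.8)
The plan is to reduce injectivity to the root‑counting bound for Fourier transforms stated in the preceding Lemma (Theorem~3 in \cite{tijdeman_1971}). Suppose $\mu,\nu\in S_{[0,\Lambda],s}$ satisfy $\phi_v\mu=\phi_v\nu$, and set $\eta=\mu-\nu$. By linearity of the Fourier transform, $\hat\eta(v_j)=(\phi_v\mu)_j-(\phi_v\nu)_j=0$ for every $j\in\{1,2,\dots,n\}$, so each of the $n$ distinct sampling points $v_1,\dots,v_n\in[-\Omega,\Omega]$ is a root of $\hat\eta$. The goal is then to show this forces $\eta=0$.

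Next I would check that $\eta$ is again an admissible signal of controlled support size. Writing $\mu=\sum_{j=1}^{s}c_j\delta_{t_j}$ and $\nu=\sum_{k=1}^{s}d_k\delta_{r_k}$, the difference $\eta$ is a complex linear combination of the Dirac measures supported on $\{t_1,\dots,t_s\}\cup\{r_1,\dots,r_s\}\subseteq[0,\Lambda]$, so $\eta\in S_{[0,\Lambda]}$ with a support of size at most $2s$ (strictly less if nodes coincide and the corresponding coefficients partly cancel). Applying the Lemma with the parameter $2s$ in place of $s$ — which is legitimate since $2s>\Lambda\Omega$, a condition implicit in the hypothesis, as otherwise $\ln(2s/\Lambda\Omega)\le 0$ and the stated bound on $n$ is not meaningful — we obtain that if $\eta\neq0$, then $\hat\eta$ has at most $\bigl(1+6/\ln(2s/\Lambda\Omega)\bigr)\cdot 2s=\bigl(2+12/\ln(2s/\Lambda\Omega)\bigr)s$ roots in $[-\Omega,\Omega]$.

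Comparing the two observations yields the contradiction: $\hat\eta$ has at least the $n$ roots $v_1,\dots,v_n$ in $[-\Omega,\Omega]$, so $n\le\bigl(2+12/\ln(2s/\Lambda\Omega)\bigr)s$, contradicting the assumed strict inequality $n>\bigl(2+12/\ln(2s/\Lambda\Omega)\bigr)s$. Hence $\eta=0$, i.e.\ $\mu=\nu$, and $\phi_v$ restricted to $S_{[0,\Lambda],s}$ is injective. This is essentially a direct corollary of the root bound, so I do not anticipate a serious obstacle; the only points that need care are tracking that the difference of two signals of support size $s$ lies in $S_{[0,\Lambda]}$ with support size at most $2s$, and noting the implicit requirement $2s>\Lambda\Omega$ needed both to invoke the cited Lemma and to make the bound on $n$ well defined.
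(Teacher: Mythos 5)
Your proposal is correct and follows essentially the same route as the paper: form the difference $\mu_1-\mu_2\in S_{[0,\Lambda],2s}$, observe its Fourier transform vanishes at the $n$ sampling points, and invoke the root-count bound with $2s$ in place of $s$ to force the difference to be zero. Your explicit remark about the implicit condition $2s>\Lambda\Omega$ is a reasonable point of care that the paper leaves tacit.
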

\begin{proof} If
$\mu_1, \mu_2 \in S_{[0,\Lambda],s}$ give $\phi_v\mu_1 = \phi_v\mu_2$,
then $\phi_v(\mu_1-\mu_2)=0$. Since $\mu=\mu_1 - \mu_2 \in  S_{[0,\Lambda],2s}$,
$\hat\mu$ has $n$ zeroes. This is only possible if $\mu=0$, so $\mu_1=\mu_2$.
\end{proof}

\section{Main Results}

Next, we wish to consider nonlinear, squared magnitude measurements. For this purpose, we consider the finite set of points $V=\{v_1, v_2, \dots, v_n\}$ in $[-\Omega,\Omega]$,  and identify these points
with the vertices in a graph $\Gamma$ having an edge set $E$. 
Typically, graph theory does not require a specific choice of the set labeling the vertices. Here, we apply the properties of graphs
to establish a desirable measurement structure and for this purpose choose the vertices to be a {\it concrete} subset of $\mathbb R$.  

Given $\mu$ as described, we
consider the following data consisting of intensity values
\begin{align*}
&\mathcal{M}_{0,\Gamma}(\hat\mu) =(|\hat{\mu}(v_{j})|^{2})_{j=1}^{n},\\
&\mathcal{M}_{1,\Gamma}(\hat\mu)=(|\hat{\mu}(v_{j})-\hat{\mu}(v_{k})|^{2})_{\{v_j,v_k\} \in E,j<k},\\ \text{and}
\quad&\mathcal{M}_{2,\Gamma}(\hat\mu) =(|\hat{\mu}(v_{j})-i\hat{\mu}(v_{k})|^{2})_{\{v_j,v_k\}\in E,j<k}.\end{align*}

This implies that the total number of measured quantities is the sum $M= |V|+2|E|$.

\subsection{Injectivity}\label{sec:inject}

The first goal is to derive conditions for injectivity of the measurement.
We rely on the following elementary lemma, which is at the heart of the so-called phase propagation algorithm used in \cite{bodmann_hammen_2017}, see also \cite{Balan:2009fk} and \cite{bodmann_hammen_2015}.

\begin{lemma}
If the entries of the column vectors $a,b \in \mathbb C^2$ satisfy $|a_1|=|b_1|$, $|a_2|=|b_2|$
as well as $|a_1-a_2| = |b_1 - b_2|$ and $|a_1-i a_2| = |b_1 -i b_2|$,
then the matrix identity $a a^* = b b^*$ holds. Moreover, if $a_2 \ne 0$, then
$a_1/a_2 = b_1/b_2$.
\end{lemma}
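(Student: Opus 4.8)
The plan is to reduce everything to the two real quantities encoding the relative phase of $a_1$ and $a_2$. Write $a_1 = r_1 e^{i\theta_1}$ and $a_2 = r_2 e^{i\theta_2}$, and similarly $b_1 = \rho_1 e^{i\phi_1}$, $b_2 = \rho_2 e^{i\phi_2}$. The first two hypotheses give $r_1 = \rho_1$ and $r_2 = \rho_2$, so it remains to extract information about $\theta_1 - \theta_2$ versus $\phi_1 - \phi_2$ from the two difference magnitudes. Expanding $|a_1 - a_2|^2 = r_1^2 + r_2^2 - 2 r_1 r_2 \cos(\theta_1 - \theta_2)$ and $|a_1 - i a_2|^2 = r_1^2 + r_2^2 - 2 r_1 r_2 \sin(\theta_1 - \theta_2)$ (using $\mathrm{Re}(a_1 \overline{i a_2}) = \mathrm{Re}(-i a_1 \overline{a_2}) = \mathrm{Im}(a_1 \overline{a_2})$, up to a sign I would fix carefully), and doing the same for $b$, the four hypotheses force $r_1 r_2 \cos(\theta_1 - \theta_2) = \rho_1 \rho_2 \cos(\phi_1 - \phi_2)$ and $r_1 r_2 \sin(\theta_1 - \theta_2) = \rho_1 \rho_2 \sin(\phi_1 - \phi_2)$. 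Equivalently, $r_1 r_2 e^{i(\theta_1 - \theta_2)} = \rho_1 \rho_2 e^{i(\phi_1 - \phi_2)}$, i.e. $a_1 \overline{a_2} = b_1 \overline{b_2}$.

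Once $a_1 \overline{a_2} = b_1 \overline{b_2}$ is established, I would assemble the matrix identity entrywise: the diagonal entries of $a a^*$ are $|a_1|^2, |a_2|^2$, which match those of $b b^*$ by the modulus hypotheses; the off-diagonal entries are $a_1 \overline{a_2}$ and its conjugate, which match by what was just shown. Hence $a a^* = b b^*$. For the final clause, if $a_2 \neq 0$ then $|b_2| = |a_2| \neq 0$ as well, so both ratios make sense, and dividing $a_1 \overline{a_2} = b_1 \overline{b_2}$ by $|a_2|^2 = |b_2|^2$ gives $a_1/a_2 = b_1/b_2$.

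The one point needing genuine care — the only place I expect a subtlety — is the sign bookkeeping in the "$-i$" measurement: one must verify that $|a_1 - i a_2|^2$ really isolates $\mathrm{Im}(a_1 \overline{a_2})$ (with a definite sign) rather than some other combination, so that the cosine and sine of the same angle are both recovered and the complex number $a_1 \overline{a_2}$ is pinned down completely rather than only up to reflection. Concretely, $|a_1 - i a_2|^2 = |a_1|^2 + |a_2|^2 - 2\,\mathrm{Re}(a_1 \overline{i a_2})$ and $\mathrm{Re}(a_1 \overline{i a_2}) = \mathrm{Re}(-i\, a_1 \overline{a_2}) = \mathrm{Im}(a_1 \overline{a_2})$, so together with $\mathrm{Re}(a_1\overline{a_2})$ from the first difference both real and imaginary parts of $a_1\overline{a_2}$ are determined and equal to those of $b_1 \overline{b_2}$. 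Everything else is a routine expansion, so the proof is short.
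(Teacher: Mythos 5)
Your proof is correct, and the one point you flagged as delicate (the sign in the $-i$ measurement) is handled properly: since the same expression $\mathrm{Im}(a_1\overline{a_2})$ appears on both sides, even an error in the overall sign convention would be harmless, and your Cartesian identities $|a_1-a_2|^2=|a_1|^2+|a_2|^2-2\,\mathrm{Re}(a_1\overline{a_2})$, $|a_1-ia_2|^2=|a_1|^2+|a_2|^2-2\,\mathrm{Im}(a_1\overline{a_2})$ also bypass the degenerate case $a_1a_2=0$ where the polar phases are undefined. Your route does differ from the paper's in organization: you establish $a_1\overline{a_2}=b_1\overline{b_2}$ first by direct expansion and then assemble $aa^*=bb^*$ entrywise, whereas the paper rewrites the four hypotheses as $\mathrm{tr}[A_i\,aa^*]=\mathrm{tr}[A_i\,bb^*]$ for $A_1=e_1e_1^*$, $A_2=e_2e_2^*$, $A_3=(e_1-e_2)(e_1-e_2)^*$, $A_4=(e_1-ie_2)(e_1-ie_2)^*$, observes that $\{A_1,A_2,A_3,A_4\}$ spans the $2\times2$ matrices, concludes $aa^*=bb^*$ at once, and only then reads off the off-diagonal entry. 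The two arguments are the same polarization computation in different clothing; yours is more elementary and self-contained, while the paper's trace-against-a-basis formulation is the one that scales to the frame-theoretic viewpoint used elsewhere in the phase-retrieval literature (measurements as a spanning set of rank-one quadratic forms), which is presumably why the authors phrase it that way. Either proof is acceptable, and your final step (dividing $a_1\overline{a_2}=b_1\overline{b_2}$ by $|a_2|^2=|b_2|^2>0$) matches the paper's exactly.
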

\begin{proof}
 The stated identities can be re-expressed as $\mathrm{tr}[ A_i aa^* ] = \mathrm{tr}[ A_i b b^*]$
with $i \in \{1,2,3,4\}$ and $A_1 = e_1 e_1^*$, $A_2=e_2e_2^*$, $A_3 = e_1 e_1^*-e_1 e_2^*-e_2 e_1^* +e_2 e_2^*$
and $A_4 =  e_1 e_1^*- ie_1 e_2^*+ i e_2 e_1^* +e_2 e_2^*$, where $e_i$ is the $i$th canonical basis vector. 
Since $\{A_1, A_2, A_3, A_4\}$ is a vector-space basis for the space of all $2 \times 2$ matrices,  the rank-one matrices obtained from $a$ and $b$ are identical, $a a^* = b b^*$.
This implies $a_1 \overline{a_2} = b_1 \overline{b_2}$. Assuming $a_2 \ne 0$ and using $|a_2|^2 = |b_2|^2$, we get
the desired identity for the quotient.
\end{proof}

As a simple consequence, if  $\mu \in S_{[0,\Lambda],1}$ then the recovery of $[\mu]$ is possible with a measurement
based on the graph with one edge and two vertices.


\begin{prop}
Let $\mu \in S_{[0,\Lambda],1}$, so $\mu = c \delta_t$ with $c \in \mathbb C$ and $t \in [0,\Lambda]$, and $\{v_1, v_2\} \subset [-\Omega,\Omega]$ with $\Omega\Lambda< 1/2$, 
then the graph $\Gamma$ with vertices $V=\{v_1, v_2\}$ and one edge $E=\{\{v_1,v_2\}\}$ permits the recovery of 
$[\mu]$ from $\{\mathcal{M}_{i,\Gamma}(\hat\mu)\}_{i=0}^2$ with a total of 4 measured magnitudes.
\end{prop}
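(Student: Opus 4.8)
The plan is to apply the elementary lemma on phase propagation stated above exactly once and then to invert an exponential. First observe that if $\hat\mu(v_1)=0$ then $|c|=0$, so $\mu=0$ and $[\mu]$ is trivially identified; assume henceforth that $c\neq0$, so that $\hat\mu(\omega)=ce^{-2\pi i\omega t}$ never vanishes. Write $a_1=\hat\mu(v_1)$ and $a_2=\hat\mu(v_2)$. The three data sets $\mathcal{M}_{0,\Gamma}(\hat\mu)$, $\mathcal{M}_{1,\Gamma}(\hat\mu)$, $\mathcal{M}_{2,\Gamma}(\hat\mu)$ record precisely the four numbers $|a_1|^2$, $|a_2|^2$, $|a_1-a_2|^2$ and $|a_1-ia_2|^2$, which are the four magnitudes appearing in the hypothesis of the elementary lemma. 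By that lemma these numbers determine the rank-one matrix $aa^*$, in the sense that any $b\in\mathbb{C}^2$ with the same four magnitudes satisfies $bb^*=aa^*$; and since $a_2=\hat\mu(v_2)\neq0$, they determine the ratio $r:=\hat\mu(v_1)/\hat\mu(v_2)=e^{-2\pi i(v_1-v_2)t}$.

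Next I would recover $t$ from $r$. Since $\{v_1,v_2\}$ is an edge we have $v_1\neq v_2$, and $v_1,v_2\in[-\Omega,\Omega]$ forces $|v_1-v_2|\le2\Omega$. As $t$ ranges over $[0,\Lambda]$, the real number $(v_1-v_2)t$ ranges over an interval of length $|v_1-v_2|\,\Lambda\le2\Omega\Lambda<1$, where the strict inequality uses the hypothesis $\Omega\Lambda<1/2$. The map $x\mapsto e^{-2\pi ix}$ is injective on every interval of length strictly less than $1$, so $r$ determines $(v_1-v_2)t$, and dividing by the known nonzero number $v_1-v_2$ yields $t$. Finally $|\hat\mu(v_1)|^2=|c|^2$ gives $|c|$. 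Hence, if $\nu\in S_{[0,\Lambda],1}$ produces the same four intensities, the preceding steps show $\nu=c'\delta_t$ with $|c'|=|c|$, so $\nu=(c'/c)\mu$ with $|c'/c|=1$, i.e.\ $\nu\sim\mu$; thus $[\mu]$ is determined by $\{\mathcal{M}_{i,\Gamma}(\hat\mu)\}_{i=0}^{2}$, which by $M=|V|+2|E|$ consists of $2+2=4$ magnitudes.

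The argument is short, and the only place where the separation hypothesis $\Omega\Lambda<1/2$ enters is the injectivity of the exponential on the range of $(v_1-v_2)t$: the crude bound $|v_1-v_2|\le2\Omega$ together with $t\le\Lambda$ is exactly what makes that interval shorter than one period. This is the step I would write out most carefully, since it is the prototype for how the resolution parameter $\Omega$ and the support diameter $\Lambda$ must be balanced in the general case; everything else is the direct invocation of the elementary lemma and routine bookkeeping with the equivalence relation $\sim$.
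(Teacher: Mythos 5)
Your proof is correct and follows essentially the same route as the paper: use the elementary lemma to extract the ratio $\hat\mu(v_1)/\hat\mu(v_2)=e^{-2\pi i (v_1-v_2)t}$, recover $t$ from the bound $|v_1-v_2|\,\Lambda\le 2\Omega\Lambda<1$ ensuring injectivity of the exponential, and read off $|c|$ from the intensity at $v_1$. Your treatment of the degenerate case $c=0$ and the explicit injectivity argument are just slightly more detailed versions of the paper's own steps.
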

\begin{proof}
Equivalently to the recovery of $[\mu]$, we have to determine the unknown parameter $t \in [0,\Lambda]$ in $\hat \mu (\omega)= c e^{-2\pi i t \omega}$, and the magnitude $|c|$ of the factor $c \in \mathbb C$.
We know $\mu=0$ if and only if $\hat \mu(v_1)= \hat \mu(v_2)=0$, so it
remains to treat the case when $\hat \mu$ does not vanish at $v_1$ or $v_2$, but then it is non-zero at both points.
In fact, we have $|c|=|\mu(v_1)|$, so it only remains to identify $t$.

According to the preceding lemma, measuring the interference magnitudes
together with the intensities at $v_1$ and $v_2$ permits us to compute the quotient $\hat \mu (v_1)/\hat \mu(v_2) = e^{-2\pi i t (v_1-v_2)}$,
and from $0 \le t|v_1-v_2| \le 2 \Lambda \Omega<1$, we can solve for $t$.
\end{proof}

Computing $\hat\mu(v_i)/\hat \mu(v_j)$ from the measured data is possible if $\hat \mu(v_j) \ne 0$. We then say the ``phase propagates from $v_j$ to $v_i$ along the edge $\{v_i,v_j\}$''. If there is a path in $\Gamma$ such that each vertex $v_j$ along the path corresponds to a  non-vanishing value $\hat \mu (v_j)$,
then fixing the phase at one end determines the values of $\hat \mu$ at all other vertices of the path.
If $\Gamma$ has a spanning tree where the vanishing values of $\hat \mu$ are in a subset of the leaves, then fixing the phase at the root permits to propagate the phase to the entire graph. In general, this may not be possible because of vertices where $\hat \mu$ vanishes, but then it is still possible to propagate the phase among vertices in a subset.

\begin{defn} Let $\Gamma=(V,E)$ be a graph. The subgraph of $\Gamma$ induced by a subset  $W \subset V$ is given by
the vertex set $W$ and the set of edges  $\{v_j,v_k\} \in E $ with $\{v_j, v_k \} \subset W$. Given a complex measure $\mu$ of finite support, we often 
choose $W=\{v \in V: \hat \mu(v) \ne 0\}$ and consider the subgraph $\Gamma_\mu$ induced by $W$.
\end{defn}

In order to ensure an induced subgraph with a large connected component, 
we exploit properties of expander graphs, as presented in \cite{alexeev_bandeira_fickus_mixon_2014}. As a first step, we
formulate a general result.

\begin{thm}\label{thm:lin_inject}
Let $s \in \mathbb N$, $\Omega, \Lambda >0$ with $\Omega \Lambda < 1/2$, $\mu \in S_{[0,\Lambda],s}$ and $\Gamma=(V,E)$ be a graph 
such that the induced subgraph $\Gamma_\mu$ has
at least one connected component with 
 $(2 + 12 /\ln(2s/\Lambda\Omega)) s$ vertices, then the measurement $\{\mathcal{M}_{i,\Gamma}(\hat\mu)\}_{i=0}^2$
 determines $[\mu]$.
\end{thm}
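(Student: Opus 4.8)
The plan is to combine the phase-propagation idea with the injectivity of linear Fourier sampling (the Proposition preceding the theorem). First I would dispose of the degenerate case: if $\hat\mu$ vanishes at every vertex of $V$, then in particular $\hat\mu$ has $|V| \geq (2 + 12/\ln(2s/\Lambda\Omega))s > 2s$ zeros in $[-\Omega,\Omega]$, which by the Tijdeman bound (applied with $2s$ in place of $s$, as in the Proposition) forces $\mu = 0$; then $[\mu]$ is trivially determined. So from now on assume $\mu \neq 0$ and let $W = \{v \in V : \hat\mu(v) \neq 0\}$, which is nonempty, and let $\Gamma_\mu$ be the induced subgraph. By hypothesis $\Gamma_\mu$ has a connected component $C$ with at least $N := (2 + 12/\ln(2s/\Lambda\Omega))s$ vertices.

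Next I would run the phase-propagation argument on $C$. Pick any vertex $v_{j_0} \in C$ and any unimodular representative choice, i.e., fix a candidate value of $\hat\mu(v_{j_0})$ consistent with the measured magnitude $|\hat\mu(v_{j_0})|^2$; since $C$ is connected and every vertex of $C$ carries a nonzero value of $\hat\mu$, the elementary $2\times 2$ interference lemma lets us compute $\hat\mu(v_i)/\hat\mu(v_j)$ along every edge of $C$, hence determine $\hat\mu(v)$ for all $v \in C$ up to the single global unimodular constant coming from the choice at $v_{j_0}$. Thus from the measurement $\{\mathcal{M}_{i,\Gamma}(\hat\mu)\}_{i=0}^2$ we recover the vector $(\hat\mu(v))_{v\in C}$ up to multiplication by some $\zeta\in\mathbb{C}$, $|\zeta|=1$; equivalently we recover $(\widehat{\zeta\mu}(v))_{v\in C}$ for an unknown unimodular $\zeta$.

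Finally I would invoke injectivity of linear sampling to pin down $[\mu]$. The vertex set of $C$ is a concrete subset of $[-\Omega,\Omega]$ of size at least $N = (2 + 12/\ln(2s/\Lambda\Omega))s$, so by the Proposition the sampling map $\phi_{v}$ restricted to these points is injective on $S_{[0,\Lambda],s}$. Since $\zeta\mu \in S_{[0,\Lambda],s}$ as well, the recovered data $(\widehat{\zeta\mu}(v))_{v\in C}$ determines $\zeta\mu$ uniquely, hence determines the equivalence class $[\mu] = [\zeta\mu]$. This completes the recovery.

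The main obstacle is bookkeeping the ambiguities correctly: one must check that the only freedom left after phase propagation on a \emph{connected} component is a single global phase (not one phase per vertex), which is exactly what connectivity plus the quotient statement of the interference lemma provides, and one must make sure the Tijdeman-type zero count is applied with the correct support size ($2s$, to cover differences, as in the Proposition's proof) — here it only needs to be applied in the trivial case $\mu=0$, but the constant in the component-size hypothesis is precisely the one that makes the Proposition applicable on $C$. No genuinely hard analysis is needed beyond what is already cited; the content is in assembling these three ingredients in the right order.
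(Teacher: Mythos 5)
Your proposal is correct and follows essentially the same route as the paper's proof: phase propagation along the edges of the large connected component of $\Gamma_\mu$ recovers $\hat\mu$ on that component up to a single global unimodular factor, and the preceding Proposition (via the Tijdeman zero-count) then determines $[\mu]$ from those samples. The only difference is your separate treatment of the case where $\hat\mu$ vanishes at every vertex, which is vacuous under the hypothesis that $\Gamma_\mu$ has a nonempty large component, so it adds nothing but does no harm.
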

\begin{proof}
We note that phase propagation permits us to assign values to $\{\hat \mu(v): v \in K\}$ where $K$ is a connected component of $W$
so that these values are consistent with the measurements. In more detail, after fixing a representative $\nu$ of $[\mu]$ by choosing $v_0 \in K$
and letting $\hat \nu(v_0) = |\hat \mu(v_0)|$, the phase propagation algorithm assigns values to its neighbors, and successively to 
the entire connected component $K$. The measurement with the subgraph of $\Gamma$ induced by $K$ thus provides $\hat \nu |_K$.  
If $K$ has the size stated in the assumption, then by the injectivity result, this 
is sufficient to characterize $[\mu]$.
\end{proof}

The implicit condition involving the signal-dependent subgraph $\Gamma_\mu$ may be acceptable for random signals
for which vanishing measurements occur with probability zero. In order to provide deterministic 
performance guarantees, Alexeev at al.\ \cite{alexeev_bandeira_fickus_mixon_2014} suggested the use of expander graphs, which are useful for
 achieving large connected components.
 

%
%

\begin{defn}  Let $\Gamma=(V,E)$ be a finite graph. Denote by $\mathcal{E}(W)$ the set of edges having one vertex in $W\subset V$ and the other in the complement of $W$ in $V$. Formally, 
\begin{equation*}\mathcal{E}(W)\vcentcolon=\{e \in E,  e \cap W\neq \emptyset \,\text{and}\, e \cap W^{c}\neq \emptyset\}.\end{equation*}
 The \textit{expansion constant} $h(\Gamma)$ is then defined by 
 \begin{equation*}h(\Gamma)\vcentcolon=\inf\Bigl\{\frac{|\mathcal{E}(W)|}{|W|}\in[0,\infty)\,\big|\,\emptyset\neq W\subset V \,\text{and}\,|W|\leq\frac{1}{2}|V|\Bigr\}\end{equation*}
such that $h(\Gamma)=\infty$ if $\Gamma$ has at most one vertex.

\end{defn}

\noindent\textit{Remark 1.5.} The expansion constant is a measure of the robustness of the graph in the sense that large values of $h(\Gamma)$ correspond to more difficulty in disconnecting large subsets of $V$ from the rest of the graph.\\

%

\begin{defn} Let $\Gamma=(V,E)$ be a finite graph. The \textit{normalized Laplace operator} of $\Gamma$, $L_{\Gamma}$, is the 
self-adjoint
linear operator \begin{align*}L_{\Gamma}:\begin{cases}
     \ell^{2}(V)\rightarrow \ell^2(V) \\
     L_\Gamma\varphi(v) = \varphi(v) - \frac{1}{|\mathcal{E}(\{v\})|}\sum_{\{v,w\} \in E}\varphi(w)
    \end{cases}\end{align*}\end{defn}

\noindent\textit{Remark.} It can be easily checked that $L_\Gamma$ is positive semi-definite and the function $\varphi:V\rightarrow\mathbb{C}$, defined by $\varphi(x)=1$ for all $x\in V$, is an eigenfunction of $L_{\Gamma}$ corresponding to the eigenvalue $\lambda_{0}=0$. 
The \textit{normalized spectral gap}, $\lambda_{1}(\Gamma)$, is the smallest nonzero eigenvalue of $L_{\Gamma}$.


Following the general strategy outlined by Alexeev et al.\ \cite{alexeev_bandeira_fickus_mixon_2014}, we choose 
a measurement based on a Ramanujan graph $\Gamma$. The size of $\Gamma$ is chosen such that after removing at most $(1 + 6 /\ln(s/\Lambda\Omega)) s $ vertices and edges adjacent to them, we may invoke properties of expander graphs to show that there remains a connected component that is large enough to guarantee that phase propagation returns a unique solution, up to the residual undetermined unimodular factor. 

We recall the relationship between $\lambda_1(L_\Gamma)$ and the expansion constant of the graph $\Gamma$.
This relationship  is of importance, as it is easier to control $\lambda_{1}(L_\Gamma)$ than  $h(\Gamma)$ directly.

\begin{prop}[Discrete Cheeger Inequality for $d$-regular graphs, Proposition 3.1.2\cite{kowalski_2018}] 
Let $\Gamma$ be a connected, non-empty, finite $d$-regular graph, then
$\lambda_{1}(L_\Gamma)\leq\frac{2}{d}h(\Gamma).$
\end{prop}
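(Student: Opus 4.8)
The plan is to establish the ``easy direction'' of the Cheeger inequality via the Rayleigh quotient. First I would record the quadratic form associated with the normalized Laplacian of a $d$-regular graph: for $\varphi \in \ell^2(V)$,
\begin{equation*}
\langle L_\Gamma \varphi, \varphi \rangle = \frac{1}{d} \sum_{\{v,w\} \in E} |\varphi(v) - \varphi(w)|^2 .
\end{equation*}
Since $\Gamma$ is connected, the eigenvalue $0$ is simple with eigenspace spanned by the constant function $\mathbf 1$, and the variational characterization of the spectral gap reads
\begin{equation*}
\lambda_1(L_\Gamma) = \inf\Bigl\{ \tfrac{\langle L_\Gamma \varphi, \varphi\rangle}{\|\varphi\|^2} : \varphi \in \ell^2(V),\ \varphi \ne 0,\ \varphi \perp \mathbf 1 \Bigr\} .
\end{equation*}

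Next I would fix any nonempty $W \subset V$ with $|W| \le \tfrac12 |V|$ and use as a test function the centered indicator $f = \mathbf 1_W - \tfrac{|W|}{|V|}\mathbf 1$, which by construction satisfies $\sum_{v \in V} f(v) = 0$, hence $f \perp \mathbf 1$. For the numerator, the differences $f(v) - f(w)$ equal $\mathbf 1_W(v) - \mathbf 1_W(w)$, which is nonzero (and of modulus $1$) exactly on the edges in $\mathcal E(W)$, so $\langle L_\Gamma f, f\rangle = \tfrac{1}{d}|\mathcal E(W)|$. For the denominator, writing $n = |V|$ and $k = |W|$, a short computation gives $\|f\|^2 = k(1 - k/n)^2 + (n-k)(k/n)^2 = \tfrac{k(n-k)}{n}$, and the constraint $k \le n/2$ yields $n - k \ge n/2$, hence $\|f\|^2 \ge k/2 = |W|/2$.

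Combining these two estimates in the Rayleigh quotient gives
\begin{equation*}
\lambda_1(L_\Gamma) \le \frac{\langle L_\Gamma f, f\rangle}{\|f\|^2} \le \frac{|\mathcal E(W)|/d}{|W|/2} = \frac{2}{d}\cdot\frac{|\mathcal E(W)|}{|W|} .
\end{equation*}
Since this holds for every admissible $W$, I would take the infimum over all nonempty $W$ with $|W| \le \tfrac12|V|$ to conclude $\lambda_1(L_\Gamma) \le \tfrac{2}{d} h(\Gamma)$. There is no serious obstacle here: the only points requiring care are verifying the orthogonality of the test function to $\mathbf 1$, the identity for the quadratic form of $L_\Gamma$ in the $d$-regular case, and the passage from the inequality for each $W$ to the infimum defining $h(\Gamma)$ (which is immediate, as no single $W$ need attain it).
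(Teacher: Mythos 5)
Your proof is correct: the quadratic-form identity $\langle L_\Gamma \varphi,\varphi\rangle = \frac1d\sum_{\{v,w\}\in E}|\varphi(v)-\varphi(w)|^2$, the orthogonality of the centered indicator $f=\mathbf 1_W-\frac{|W|}{|V|}\mathbf 1$, the computation $\|f\|^2=\frac{|W|(|V|-|W|)}{|V|}\ge |W|/2$, and the passage to the infimum are all sound. Note that the paper does not prove this proposition at all — it cites it as Proposition 3.1.2 of Kowalski's book — so there is no in-paper argument to compare with; your Rayleigh-quotient/test-function argument is the standard proof of this ``easy direction'' of the discrete Cheeger inequality and is essentially what the cited reference does.
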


\begin{lemma}[Lemma 5.2 in \cite{harsha_barth_2005}] Let $d\geq 2$. Let $\Gamma=(V,E)$ be a $d$-regular connected graph. For all $\delta< \frac{\lambda_{1}(\Gamma)}{12}$, removing at most $2\delta d |V|$ edges from the graph will leave a connected component of size at least $2|V|/3$.
\end{lemma}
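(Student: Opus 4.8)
The plan is to argue by contradiction, combining the discrete Cheeger inequality quoted just above with an elementary pigeonhole argument on the connected components that survive the edge deletion. This is the argument of Harsha and Barth, which I would reproduce as follows.

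First I would fix a set $F \subseteq E$ with $|F| \le 2\delta d|V|$ and let $\Gamma' = (V, E\setminus F)$ have connected components $C_1, C_2, \dots, C_m$, ordered so that $|C_1| \ge |C_2| \ge \dots \ge |C_m|$. Suppose, toward a contradiction, that $|C_j| < \tfrac{2}{3}|V|$ for every $j$. Since a single component of full size would violate this, $m \ge 2$. The heart of the argument is to extract from the $C_j$ a vertex set $W$ that is a union of components and satisfies $\tfrac{1}{3}|V| \le |W| \le \tfrac{1}{2}|V|$.

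I would split into two cases. If $|C_1| \ge \tfrac13|V|$, then $\tfrac13|V| \le |C_1| < \tfrac23|V|$, so both $C_1$ and its complement $C_1^c = C_2 \cup \dots \cup C_m$ have more than $\tfrac13|V|$ vertices; taking $W$ to be whichever of the two has size at most $\tfrac12|V|$ does the job. If $|C_1| < \tfrac13|V|$, then every component has fewer than $\tfrac13|V|$ vertices; adding components one at a time and stopping the first time the cumulative size is at least $\tfrac13|V|$ produces a union $W$ with $\tfrac13|V| \le |W| < \tfrac23|V|$ (the upper bound because the last component added has size less than $\tfrac13|V|$), and replacing $W$ by $W^c$ if needed gives $|W| \le \tfrac12|V|$. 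In all cases $W$ is a union of components of $\Gamma'$, so every edge of $\Gamma$ joining $W$ to its complement lies in $F$; hence $|\mathcal{E}(W)| \le |F| \le 2\delta d|V|$.

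Finally I would feed this into the Cheeger bound. Because $1 \le |W| \le \tfrac12|V|$, the definition of the expansion constant gives $h(\Gamma) \le |\mathcal{E}(W)|/|W| \le (2\delta d|V|)/(\tfrac13|V|) = 6\delta d$, and then the $d$-regular Cheeger inequality yields $\lambda_1(\Gamma) \le \tfrac2d h(\Gamma) \le 12\delta$, contradicting $\delta < \lambda_1(\Gamma)/12$. Therefore some component has at least $\tfrac23|V|$ vertices. I expect the only step requiring care to be the greedy construction of $W$ in the second case, namely verifying that stopping at the first moment the cumulative size reaches $\tfrac13|V|$ cannot overshoot $\tfrac23|V|$ — this is precisely where the hypothesis $|C_1| < \tfrac13|V|$ enters; the remainder is a direct substitution into the Cheeger estimate.
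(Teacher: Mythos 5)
Your proof is correct and rests on the same two pillars as the paper's: every union of surviving components has its $\Gamma$-cut contained in the removed edge set, and the discrete Cheeger inequality converts a small cut of a set of size between $|V|/3$ and $|V|/2$ into the bound $\lambda_1(\Gamma)\le 12\delta$, contradicting $\delta<\lambda_1(\Gamma)/12$. The two arguments differ only in how that intermediate set $W$ is produced. The paper first shows that \emph{every} union of components of size at most $\tfrac12|V|$ must in fact have size strictly below $\tfrac13|V|$, and then deduces the existence of a component larger than $\tfrac23|V|$ indirectly, via an iterative union-of-components and complementation argument. You instead assume no component reaches $\tfrac23|V|$ and construct $W$ with $\tfrac13|V|\le|W|\le\tfrac12|V|$ directly, by a case split on the largest component together with a greedy aggregation of small components; this yields the same contradiction by a somewhat more transparent route, and the one step that needs care (that the greedy selection cannot overshoot $\tfrac23|V|$ when every component has size below $\tfrac13|V|$) is exactly the one you verified.
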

\begin{proof}
If removing the edges divides the graph into at least two connected components, then there is $W\subset V$ with $|W|\leq\frac{1}{2}|V|$ and $|\mathcal{E}(W)|\leq2\delta d |V|$. This implies by Cheeger's inequality that \begin{align*}|W|\lambda_{1}(\Gamma)\frac{d}{2}&\leq|W| h(\Gamma)\\
&\leq|\mathcal{E}(W)|\\
&\leq 2\delta d |V|\\
&< 2\frac{\lambda_{1}(\Gamma)}{12} d |V|,\end{align*} which gives \begin{equation*}|W|<\frac{1}{3}|V|.\end{equation*} 
This shows that every subset $W$ that gets separated from the rest of the graph and is of size $|W|\leq\frac{1}{2}|V|$, must have a size 
less than $\frac{1}{3}|V|$. By taking complements, we obtain that if $W^c=V\setminus W$ becomes a connected component
 of size $|W^c| \ge |V|/2$ after the edges are removed,
then it actually has the larger size $|W^c| > 2 |V|/3$. To finish the proof, we show the existence of such a connected component.

Assume each connected component is of size at most $|V|/2$, then their individual size is less than $|V|/3$. 
Moreover, taking the union of two disjoint sets $W_1$ and $W_2$, each of size at most $|V|/2$ and disconnected from their complements,  
then $|W_1 \cup W_2 | = |W_1 | + |W_2| < 2 |V|/3$. If $|W_1  \cup W_2| \ge |V|/2$, then by taking complements
and using $|(W_1 \cup W_2)^c| < |V|/3$ we get
$|W_1 \cup W_2| > 2 |V|/3$, a contradiction. This implies $|W_1 \cup W_2 | \le |V|/2$, and in turn, $|W_1 \cup W_2 | < |V|/3$.
After taking the union of all connected components, we still have a set of size less than $|V|/3$, which contradicts that the 
graph has $|V|$ vertices. Hence, there exists a connected component $W$ with $|W|>|V|/2$, and thus
$|W| > 2|V|/3$.
\end{proof}


 Ramanujan graphs are known to give the best asymptotic value for  $\lambda_{1}(L_\Gamma)$. We use them to provide 
 expansion properties.


\begin{defn} Let $d\geq 3$ be an integer. A $d$-regular connected finite graph $\Gamma$ is
called \textit{a Ramanujan graph} if $\lambda_1(L_\Gamma)\ge 1-\frac{2\sqrt{d-1}}{d}$.
\end{defn}

\noindent\textit{Remark 1.16.} Proving the existence of expander families in general, and expander families of Ramanujan graphs in particular, has been challenging. We refer to \cite[Theorem 1.1]{marcus_spielman_srivastava_2015} for the proof of the existence of the latter for each $d\ge 3$ and vertex sets that are of even size.
The simplest construction presented there gives bipartite Ramanujan graphs with $n=2^{r+1} d$ vertices, for any $r \in \mathbb N$.



\begin{prop} Let $d \ge 3$ and $\Gamma=(V,E)$ be a $d$-regular Ramanujan graph, then
removing less than $\Bigl(\frac{d}{6} - \frac{\sqrt{d-1}}{3} \Bigr) |V|$ edges from $\Gamma$ leaves a connected
component with at least $2|V|/3$ vertices. 
\end{prop}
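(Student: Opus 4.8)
The plan is to deduce the statement directly from the edge-removal lemma quoted above (Lemma~5.2 of \cite{harsha_barth_2005}) together with the definition of a Ramanujan graph. First I would check that the hypotheses of that lemma hold: a $d$-regular Ramanujan graph with $d\ge 3$ is by definition a $d$-regular, connected, finite graph with $d\ge 2$, and it satisfies $\lambda_1(L_\Gamma)\ge 1-\frac{2\sqrt{d-1}}{d}$. Note that for $d\ge 3$ we have $(d-2)^2>0$, hence $\frac{2\sqrt{d-1}}{d}<1$ and the spectral-gap lower bound $1-\frac{2\sqrt{d-1}}{d}$ is strictly positive; this is also exactly what makes the threshold $\frac{d}{6}-\frac{\sqrt{d-1}}{3}=\frac{d-2\sqrt{d-1}}{6}$ positive, so the statement is non-vacuous precisely in the regime $d\ge 3$ where Ramanujan graphs exist.

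The core step is a change of parametrization. Suppose we remove a set of $m$ edges from $\Gamma$ with $m<\bigl(\frac{d}{6}-\frac{\sqrt{d-1}}{3}\bigr)|V|$. Define $\delta:=\frac{m}{2d|V|}$, so that $m=2\delta d|V|$ is exactly the edge count appearing in the removal lemma. The hypothesis on $m$ rewrites as
\[
\delta=\frac{m}{2d|V|}<\frac{1}{2d|V|}\Bigl(\frac{d}{6}-\frac{\sqrt{d-1}}{3}\Bigr)|V|=\frac{1}{12}\Bigl(1-\frac{2\sqrt{d-1}}{d}\Bigr),
\]
where the last equality is the one-line identity $\frac{1}{2d}\bigl(\frac{d}{6}-\frac{\sqrt{d-1}}{3}\bigr)=\frac{d-2\sqrt{d-1}}{12d}=\frac{1}{12}\bigl(1-\frac{2\sqrt{d-1}}{d}\bigr)$. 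Since the Ramanujan property gives $\frac{1}{12}\bigl(1-\frac{2\sqrt{d-1}}{d}\bigr)\le\frac{\lambda_1(\Gamma)}{12}$, we conclude $\delta<\frac{\lambda_1(\Gamma)}{12}$.

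Now I would simply invoke the removal lemma with this value of $\delta$: removing the $m=2\delta d|V|$ edges leaves a connected component of size at least $2|V|/3$, which is precisely the claim. I do not anticipate any genuine obstacle here — the proposition is essentially the removal lemma specialized to the extremal spectral gap guaranteed by the Ramanujan condition, and the only thing to be careful about is that the number of removed edges is an integer while the lemma is phrased with a real parameter $\delta$; this is handled cleanly by choosing $\delta:=\frac{m}{2d|V|}$ after the fact, so that the strict inequality on $m$ transfers to a strict inequality $\delta<\lambda_1(\Gamma)/12$ without any rounding issues. One could also remark that it suffices to apply the lemma with the single threshold value $\delta=\frac{1}{12}\bigl(1-\frac{2\sqrt{d-1}}{d}\bigr)$ (or any value slightly below it), since removing fewer edges can only make the surviving component larger.
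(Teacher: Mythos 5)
Your proof is correct and follows essentially the same route as the paper: you reparametrize the number of removed edges as $\delta = m/(2d|V|)$, use the hypothesis to get $\delta < \frac{1}{12}\bigl(1-\frac{2\sqrt{d-1}}{d}\bigr) \le \frac{\lambda_1(\Gamma)}{12}$, and invoke the edge-removal lemma. The extra care about the integrality of $m$ and the positivity of the threshold for $d\ge 3$ is a fine addition but not a substantive difference.
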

\begin{proof}
The  assumption $2\delta d |V| < \Bigl(\frac{d}{6} - \frac{\sqrt{d-1}}{3} \Bigr) |V|$
implies $\delta < \frac{1}{12} - \frac{\sqrt{d-1}}{6d}$ and by the inequality for $\lambda_1(L_\Gamma)$ of the Ramanujan graph,
$\delta < \frac{\lambda_1(\Gamma)}{12}$, so the previous lemma applies.
\end{proof}



We conclude a condition that ensures injectivity of measurements. For this purpose, we rely on the connectivity and spectral properties of Ramanujan graphs.

\begin{thm} \label{thm:main1} Let $\mu\in S_{[0,\Lambda]}$ be a complex measure of support $s \in \mathbb N$. 
Let $d\geq 3$ and let $\Gamma = (V,E)$ be a Ramanujan graph with regularity $d$ and at least 
$n > \frac{6d(1+6/\ln(s/\Lambda\Omega))s}{d - 2 \sqrt{d-1}}$ vertices $V=\{v_1, v_2, \dots, v_n\}$ in the set $[-\Omega,\Omega]$ with $2 \Omega \Lambda<1$. 
The  $M=(d+1)n$ magnitude measurements $\{\mathcal{M}_{i,\Gamma}(\hat\mu)\}_{i=0}^2$ associated with the graph $\Gamma$ then determine  $[\mu]\in S_{[0,\Lambda]}/\sim$.
\end{thm}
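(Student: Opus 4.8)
The plan is to deduce the statement from Theorem~\ref{thm:lin_inject}: it suffices to show that, whenever $\mu\neq 0$, the signal-dependent subgraph $\Gamma_\mu$ still contains a connected component of the size demanded there. The case $\mu=0$ is trivial, since then every intensity in $\{\mathcal{M}_{i,\Gamma}(\hat\mu)\}_{i=0}^2$ vanishes and $[\mu]=[0]$; so assume $\mu\neq 0$ from now on.

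The first step is to control the vertices at which $\hat\mu$ vanishes. Put $B=\{v\in V:\hat\mu(v)=0\}$ and $W=V\setminus B$, and note that $2\Omega\Lambda<1$ forces $\Omega\Lambda<1/2\le s$, so the root-counting lemma quoted above (Tijdeman) applies and gives $|B|\le r$ with $r=(1+6/\ln(s/\Lambda\Omega))s$. Because $\Gamma$ is $d$-regular, deleting from $\Gamma$ every edge incident with a vertex of $B$ removes at most $d\,|B|\le dr$ edges; the resulting graph leaves the vertices of $B$ isolated and, restricted to $W$, coincides with the induced subgraph $\Gamma_\mu$.

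The second step applies the expansion of the Ramanujan graph. The assumed bound $n>\frac{6dr}{d-2\sqrt{d-1}}$ is precisely the inequality $dr<\bigl(\tfrac{d}{6}-\tfrac{\sqrt{d-1}}{3}\bigr)n$, so the preceding proposition on $d$-regular Ramanujan graphs applies to the edge deletion just performed and yields a connected component $C$ of the deleted graph with $|C|\ge 2n/3>1$. Since the vertices of $B$ are isolated, $C$ cannot contain any of them, and because the edges retained among $W$ are exactly those of $\Gamma_\mu$, the set $C$ is a connected component of $\Gamma_\mu$. The third step is the arithmetic check that $C$ is large enough: from $d-2\sqrt{d-1}=(\sqrt{d-1}-1)^2\le d$, which is strictly positive for $d\ge 3$, and $0<\ln(s/\Lambda\Omega)<\ln(2s/\Lambda\Omega)$, one obtains $|C|\ge 2n/3>\tfrac{4dr}{d-2\sqrt{d-1}}\ge 4r>(2+12/\ln(2s/\Lambda\Omega))s$. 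Hence $\Gamma_\mu$ has a connected component with more than $(2+12/\ln(2s/\Lambda\Omega))s$ vertices, and Theorem~\ref{thm:lin_inject} (whose hypothesis $\Omega\Lambda<1/2$ holds) shows that $\{\mathcal{M}_{i,\Gamma}(\hat\mu)\}_{i=0}^2$ determines $[\mu]$; the count $M=|V|+2|E|=n+dn=(d+1)n$ is immediate from $d$-regularity.

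The place to be careful — and essentially the only nontrivial point — is the last chain of inequalities. The root bound that limits how many vertices get deleted is stated for a measure of support $s$, whereas Theorem~\ref{thm:lin_inject} needs a component large enough to separate a \emph{difference} of two such measures, i.e.\ a measure of support $2s$; this is why $\ln(2s/\Lambda\Omega)$ appears there but $\ln(s/\Lambda\Omega)$ in Tijdeman's bound. The constant $6d/(d-2\sqrt{d-1})\ge 6$ hard-wired into the vertex count is exactly the margin that simultaneously absorbs the $s\mapsto 2s$ mismatch, the factor $2/3$ lost to the expander estimate, and the factor $d$ lost when passing from deleted vertices to deleted edges. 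No ingredients beyond the lemmas already recorded are required.
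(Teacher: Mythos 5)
Your proposal is correct and follows essentially the same route as the paper's own proof: Tijdeman's root bound limits the deleted vertices to at most $(1+6/\ln(s/\Lambda\Omega))s$, hence at most $d$ times that many edges, the Ramanujan expansion proposition then yields a connected component of size at least $2n/3$ in $\Gamma_\mu$, and Theorem~\ref{thm:lin_inject} finishes the argument. Your treatment is in fact slightly more careful than the paper's on the minor points (the $\mu=0$ case, excluding isolated vertices from the large component, and the $\ln(s/\Lambda\Omega)$ versus $\ln(2s/\Lambda\Omega)$ comparison), but no new ideas are involved.
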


\begin{proof}
Let $\Gamma=(V,E)$ be a Ramanujan graph  with the claimed regularity and size, whose vertices
are chosen to be points $\{v_1, v_2, \dots, v_n\} \subset [-\Omega,\Omega]$.
We first note that $\hat \mu$ has at most $(1+6/\ln(s/\Lambda\Omega))s$ zeros in $[-\Omega,\Omega]$.
This means, when considering the induced subgraph $\Gamma_\mu$ for which only vertices are included where $\hat \mu$
does not vanish, then at most  $d (1+6/\ln(s/\Lambda\Omega))s$ edges have been removed from $\Gamma$.

With the assumption on the size of the graph $\Gamma$, $d (1+6/\ln(s/\Lambda\Omega))s \le (d- 2 \sqrt{d-1}) |V|/6$. 
By the expander property of this graph, $\Gamma_\mu$ has a connected component whose vertex set $K$ has a size at least
$$
   |K| \ge \frac 2 3 |V| = \frac{4(1+6/\ln(s/\Lambda\Omega))s}{1 - 2 \sqrt{d-1}/d}
   > 2 (1+6/\ln(s/\Lambda\Omega))s \, .
$$
Using the $M=(d+1) |V|$ measured quantities from $\{\mathcal{M}_{i,\Gamma}(\hat \mu)\}_{i=0}^2$,
we  assign values $c \hat \mu(v_j)$ for each $v_j \in  K$ with some residual unknown $c \in \mathbb C, |c|=1$.
Finally, applying Theorem~\ref{thm:lin_inject}, we
observe that $[\mu]$ is determined by $c \hat \mu |_K$. 
\end{proof}

\subsection{Recovery algorithms}\label{sec:algo}

Our next result shows that it is possible to reconstruct a given signal $\mu\in S_{[0,\Lambda]}$ explicitly, using 
the combination of an oversampled measurement, phase propagation and the Prony method.


Let $\Gamma$ be a Ramanujan graph as described, with vertices $\{v_1, v_2, \dots, v_n\}$. We choose
$v_k =  k \Omega/n$, hence the vertices are spaced equidistantly with step-size $h=\Omega/n$.

We introduce simplifying notation and borrow ideas from time-frequency analysis. 
Let $z_j = e^{-2\pi i t_j h}$ and 
$M_{z_j}$
be a modulation operator on $\ell^2(\{1,2,\dots, n\})$ given by
$$
  M_{z_j} f(k) = z_j^k f(k) \, . 
$$
The function $\hat \mu$ restricted to $V$ can then be identified with a multiplication operator
on $\ell^2(\{1,2,\dots, n\})$, a linear combination of modulation operators
$$
    T = \sum_{j=1}^s {c_j} M_{z_j} \, 
$$
and the recovery algorithm aims to convert the implicit information obtained from the action of $T$
into extracting the values of each $t_j$ and $c_j$. It turns out, once the set $\{t_j\}_{j=1}^s$ is known,
the coefficient vector $c$ is straightforward to obtain.
To motivate this insight, we state a simple linear independence property of modulation operators.

\begin{lemma}
Let $\Lambda, \Omega>0$, $\Lambda \Omega < 1/2$, $n \in \mathbb N$, $\{t_1, t_2, \dots, t_s\}\subset [0,\Lambda]$, and for each $j \in \{1,2, \dots, s\}$, 
$z_j  = e^{-2\pi i t_j h}$ with $h=\Omega/n$.
If $n>(1 + 6 /\ln(s/\Lambda\Omega)) s$, then
the associated set of modulation operators $\{M_{z_j}\}_{j=1}^s$ on $\ell^2(\{1,2,\dots, n\})$
is a linearly independent set.
\end{lemma}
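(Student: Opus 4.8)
The plan is to reduce the linear independence of the modulation operators $\{M_{z_j}\}_{j=1}^s$ to the linear independence of the corresponding coefficient functions on the sampling set $\{1,2,\dots,n\}$, and then to invoke a nonvanishing-Vandermonde argument controlled by the root-counting bound already available from Tijdeman's theorem. First I would observe that a vanishing linear combination $\sum_{j=1}^s a_j M_{z_j} = 0$ as an operator on $\ell^2(\{1,\dots,n\})$ means that for every input $f$ and every $k \in \{1,\dots,n\}$ we have $\sum_{j=1}^s a_j z_j^k f(k) = 0$; choosing $f$ supported at a single index $k$ shows this is equivalent to $\sum_{j=1}^s a_j z_j^k = 0$ for all $k \in \{1,2,\dots,n\}$. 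So the claim is exactly that the $s$ sequences $(z_j^k)_{k=1}^n$, $j=1,\dots,s$, are linearly independent in $\mathbb{C}^n$, provided the $z_j$ are distinct and $n$ is large enough.

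Next I would verify that the $z_j$ are genuinely distinct. Since $t_j \in [0,\Lambda]$ and $h = \Omega/n$, the phases $-2\pi t_j h$ lie in an interval of length $2\pi \Lambda \Omega/n < \pi/n \le \pi$, so the map $t \mapsto e^{-2\pi i t h}$ is injective on $[0,\Lambda]$; hence $t_j \mapsto z_j$ is a bijection onto $s$ distinct points on the unit circle. Then the matrix with entries $z_j^k$ for $k=1,\dots,n$ and $j=1,\dots,s$ is (a row-scaled) Vandermonde-type matrix of size $n \times s$ with $n \ge s$, so it has full column rank $s$: indeed any $s$ of its rows, say rows $k_1 < \dots < k_s$, form the matrix $(z_j^{k_\ell})_{\ell,j}$, whose determinant is $\big(\prod_j z_j^{k_1}\big)$ times a Vandermonde determinant in the distinct points $z_j$ (using that the exponents $k_\ell - k_1$ are still distinct, this needs a touch of care — one can instead just take $k_\ell = \ell$ for $\ell=1,\dots,s$, giving a clean Vandermonde in the distinct nonzero $z_j$). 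This already proves linear independence as soon as $n \ge s$, which is implied by $n > (1 + 6/\ln(s/\Lambda\Omega))s$.

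Actually, the cleanest route — and the one consistent with how the paper wants to use the hypothesis $n > (1 + 6/\ln(s/\Lambda\Omega))s$ — is to argue contrapositively via the root bound. Suppose $\sum_{j=1}^s a_j M_{z_j} = 0$ with $a \neq 0$; as above this forces $\sum_{j=1}^s a_j z_j^k = 0$ for all $k \in \{1,\dots,n\}$. Consider the nonzero measure $\nu = \sum_{j=1}^s a_j \delta_{t_j} \in S_{[0,\Lambda],s}$. Its Fourier transform satisfies $\hat\nu(\omega) = \sum_j a_j e^{-2\pi i \omega t_j}$, so $\hat\nu(kh) = \sum_j a_j z_j^k$. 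Thus $\hat\nu$ vanishes at the $n$ points $h, 2h, \dots, nh$, all of which lie in $[0,\Omega] \subset [-\Omega,\Omega]$. But by the lemma of Tijdeman (Theorem 3 in \cite{tijdeman_1971}, quoted above), a nonzero $\mu \in S_{[0,\Lambda]}$ of support at most $s$ has at most $(1 + 6/\ln(s/\Lambda\Omega))s$ roots in $[-\Omega,\Omega]$, and by hypothesis $n$ strictly exceeds this number, a contradiction. Hence $a = 0$ and the operators are linearly independent.

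The main obstacle — really the only delicate point — is the passage between the operator identity and the pointwise scalar identity, and making sure the sample points $kh$ really do land in $[-\Omega,\Omega]$: here $kh = k\Omega/n \le \Omega$ for $k \le n$, so this is immediate, but it does rely on the specific equidistant choice $v_k = k\Omega/n$. One should also note that the distinctness of the $z_j$ (needed implicitly so that $\nu$ genuinely has support size $s$ when the $a_j$ are nonzero, and so that $\nu \in S_{[0,\Lambda],s}$) uses $\Lambda\Omega < 1/2$ together with $n \ge 1$ exactly as in the earlier propositions; but in fact the contradiction argument does not even need distinctness, since $\nu$ is a nonzero element of $S_{[0,\Lambda]}$ with support at most $s$ regardless, and Tijdeman's bound applies to that class directly. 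I would therefore present the proof in the contrapositive form of the previous paragraph, as it parallels the proof of the earlier Proposition on injectivity of $\phi_v$ and reuses exactly the same ingredient.
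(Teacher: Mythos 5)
Your argument, in the form you ultimately present it, is exactly the paper's proof: a vanishing combination $\sum_j a_j M_{z_j}=0$ forces the diagonal entries $\sum_j a_j z_j^k$, which are samples of the exponential sum $\hat\nu$ at the $n$ points $kh\in[0,\Omega]\subset[-\Omega,\Omega]$, to vanish, contradicting Tijdeman's root bound since $n>(1+6/\ln(s/\Lambda\Omega))s$. Your Vandermonde aside is also correct and even shows the weaker hypothesis $n\ge s$ suffices (the $z_j$ are distinct and nonzero because $2\pi\Lambda\Omega/n<2\pi$), but since the proof you chose to give coincides with the paper's, no further comparison is needed.
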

\begin{proof}
The linear independence property is a consequence of the fact that the diagonal entries of each modulation
operator is given by a complex exponential. If a linear combination of modulation operators vanishes, then all
diagonal entries of this linear combination vanish. By the result on the number of roots of linear combinations of complex exponentials,
if $n$ is sufficiently large as assumed, this is only possible if the linear combination is trivial.
\end{proof}

\begin{cor}
Under the assumptions of the preceding lemma, 
given the operator $T=\sum_{j=1}^s c_j M_{z_j}$ on $\ell^2(\{1,2,\dots, n\})$
and the values $\{z_1, z_2, \dots, z_s\}$, then the identity for $T$ has
a unique solution $c \in \mathbb{C}^s$. 
\end{cor}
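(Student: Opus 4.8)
The plan is to deduce this directly from the linear independence of the modulation operators just established. Existence of a solution is built into the hypothesis: $T$ is presented in the form $\sum_{j=1}^s c_j M_{z_j}$, so the coefficient vector used in that presentation already is a solution. The only content is therefore uniqueness, and I would obtain it by the standard argument that a spanning-type statement plus linear independence yields unique coordinates.

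Concretely, suppose $c, c' \in \mathbb{C}^s$ both satisfy $T = \sum_{j=1}^s c_j M_{z_j} = \sum_{j=1}^s c'_j M_{z_j}$ as operators on $\ell^2(\{1,2,\dots,n\})$. Subtracting gives $\sum_{j=1}^s (c_j - c'_j) M_{z_j} = 0$. Since the assumptions of the preceding lemma are in force, in particular $n>(1 + 6/\ln(s/\Lambda\Omega))s$, that lemma tells us $\{M_{z_j}\}_{j=1}^s$ is linearly independent, so $c_j - c'_j = 0$ for every $j$, i.e. $c = c'$.

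It is worth recording, since this is what the recovery algorithm will use in practice, that the inversion can be made explicit. Each $M_{z_j}$ is the diagonal operator with entries $(z_j^k)_{k=1}^n$, so the diagonal of $T$ is the vector $b \in \mathbb{C}^n$ with $b_k = \sum_{j=1}^s c_j z_j^k$. Because the $t_j$ are distinct and $0 \le t_j h \le \Lambda\Omega/n < 1$, the nodes $z_j = e^{-2\pi i t_j h}$ are pairwise distinct; hence the $n \times s$ matrix $(z_j^k)_{k,j}$ has full column rank (for instance its first $s$ rows factor as $\mathrm{diag}(z_1,\dots,z_s)$ times a classical Vandermonde matrix in the distinct nodes $z_j$, both invertible since $|z_j|=1$ and $n > s$). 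So $c$ is recovered as the unique solution of this linear system, e.g. by inverting any invertible $s\times s$ subsystem.

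I do not anticipate a real obstacle here: the essential work — excluding a nontrivial vanishing linear combination of modulation operators, which is exactly where the Tijdeman-type bound on the number of roots of a linear combination of complex exponentials enters — has already been done in the lemma. What remains is only the routine passage from linear independence to uniqueness of coefficients, together with the elementary observation that the diagonal entries encode a Vandermonde system with distinct nodes.
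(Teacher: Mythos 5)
Your argument is correct and is exactly the intended one: the paper gives no separate proof for this corollary precisely because uniqueness of $c$ is the immediate consequence of the linear independence of $\{M_{z_j}\}_{j=1}^s$ established in the preceding lemma (subtract two representations and apply independence). The extra Vandermonde remark about the diagonal entries is fine but not needed for the statement.
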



The Prony method uses the fact that complex exponentials are eigenvectors of the shift operator.
We define the cyclic shift $S$ by $Sf(k) = f(k-1)$ and $Sf(1)=f(n)$. We recall a version of Weyl commutation relations.

\begin{lemma}
If $f \in \ell^2(\{1,2, \dots, n\})$ has support in $\{1,2, \dots, n-1\}$, then for any $z \in \mathbb C$, $|z|=1$,
$$
   S^* M_z S f = z M_z f \, .
$$
\end{lemma}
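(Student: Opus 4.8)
The plan is a direct index-by-index computation, tracking how the three operators compose, with the only subtlety occurring at the cyclic ``wrap-around'' index $k=n$.

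First I would record the action of the adjoint shift. Since $S$ is the cyclic permutation with $Se_k = e_{k+1}$ for $k<n$ and $Se_n = e_1$, it is unitary and $S^\ast = S^{-1}$ acts by $(S^\ast g)(k) = g(k+1)$ for $k \le n-1$ and $(S^\ast g)(n) = g(1)$. Next I would compute $g := M_z S f$ explicitly: for $k \ge 2$, $g(k) = z^k (Sf)(k) = z^k f(k-1)$, while $g(1) = z\,(Sf)(1) = z\, f(n)$. Applying $S^\ast$ then gives, for $k \le n-1$, $(S^\ast g)(k) = g(k+1) = z^{k+1} f(k) = z\,(M_z f)(k)$, which already matches the claimed right-hand side on all but the last index. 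For $k = n$ we get $(S^\ast g)(n) = g(1) = z\, f(n)$, whereas $(z M_z f)(n) = z^{n+1} f(n)$.

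These last two expressions agree precisely when $f(n)=0$, i.e.\ when $f$ is supported in $\{1,2,\dots,n-1\}$ — and this is the one and only place the hypothesis enters. Collecting the two cases yields $S^\ast M_z S f = z M_z f$. I do not expect any genuine obstacle here; the only thing to be careful about is the boundary index $k=n$, where the cyclic identification spuriously produces the factor $z^n$, which is annihilated exactly by the support assumption. (I would also note in passing that $|z|=1$ is not actually needed for the stated identity, though it is the case of interest and keeps $M_z$ unitary, consistent with the Weyl-relation interpretation.)
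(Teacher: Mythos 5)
Your proof is correct and is essentially the paper's argument: a direct computation from the definitions of $S$, $S^\ast$, and $M_z$, with the support hypothesis killing the only problematic (wrap-around) term. The paper phrases it via linearity on the basis vectors $e_k$, $k\le n-1$, where the boundary issue never arises, while you verify the identity pointwise including the index $k=n$; your side remark that $|z|=1$ is not needed is also accurate.
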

\begin{proof}
By linearity, it is enough to prove this for each canonical basis vector $e_k$ with $k \in \{1, 2, \dots, n-1\}$.
We note that by definition, $S e_k = e_{k+1}$ and $M_z e_k = z^k e_k$. Since $S^*$ is the inverse of $S$,
$S^* M_z S e_k = z^{k+1} e_k = z M_z e_k$.
\end{proof}

Together with linearity and iterating the application of $S$, the Weyl commutation relation can be used to generate an annihilating polynomial for $T$.
To this end, we associate a map $\Psi_q$ with each complex polynomial $q$ of degree $m$,
that maps any operator $X$ on $\ell^2(\{1,2, \dots, n\})$ to
$$
  \Psi_q(X) = \sum_{l=0}^m q_l (S^l)^* X S^l \, ,
$$
where $q(z) = \sum_{l=0}^m q_l z^l \, .$

\begin{lemma}
If $f \in \ell^2(\{1,2, \dots, n\}$ has support in $\{1,2, \dots, m\}$, then for any $z \in \mathbb C$, $|z|=1$,
and any complex polynomial $q$ of degree at most $n-m$,
$$
  \Psi_q(M_z) f=  \sum_{l=0}^{n-m} q_l (S^l)^* M_z (S^l) f = q(z) M_z f \, ,
$$
where $q(z) = \sum_{l=0}^{n-m} q_l z^l$.
\end{lemma}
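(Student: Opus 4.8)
The plan is to first reduce the claim to a single family of operator identities — one for each power of the cyclic shift — and then prove that family by induction. Since both sides depend linearly on the coefficients: $\Psi_q(M_z)f = \sum_{l=0}^{n-m} q_l (S^l)^* M_z S^l f$ and $q(z)M_z f = \sum_{l=0}^{n-m} q_l z^l M_z f$, it suffices to establish
\[
  (S^l)^* M_z S^l f = z^l M_z f \qquad \text{for all } l \in \{0, 1, \dots, n-m\},
\]
and then sum against the $q_l$.

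I would prove this by induction on $l$. The base case $l=0$ is immediate. For the inductive step, suppose the identity holds for some $l$ with $0 \le l < n-m$. The key observation is a support count: since $f$ is supported in $\{1, \dots, m\}$ and $l + m \le (n-m-1) + m = n-1$, the vector $g := S^l f$ is supported in $\{l+1, \dots, l+m\} \subseteq \{1, \dots, n-1\}$, so no cyclic wraparound has occurred and the preceding Weyl commutation lemma applies to $g$, giving $S^* M_z S g = z M_z g$. Using $(S^{l+1})^* M_z S^{l+1} = (S^l)^* \bigl(S^* M_z S\bigr) S^l$ together with this relation and then the induction hypothesis,
\[
  (S^{l+1})^* M_z S^{l+1} f = (S^l)^* \bigl(z M_z S^l f\bigr) = z\,(S^l)^* M_z S^l f = z \cdot z^l M_z f = z^{l+1} M_z f,
\]
which closes the induction.

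Summing the per-$l$ identities against $q_l$ then yields $\Psi_q(M_z)f = \sum_{l=0}^{n-m} q_l z^l M_z f = q(z) M_z f$, as claimed. The only point that requires genuine care — and the reason the polynomial degree must be capped at $n-m$ — is the support bookkeeping in the inductive step: the Weyl relation of the preceding lemma breaks down the moment the support of the shifted vector reaches the index $n$, so one must verify at each stage that $S^l f$ still lies within $\{1, \dots, n-1\}$ before invoking it. The bound $l \le n-m$ is exactly what guarantees this, so I do not expect any obstacle beyond this routine verification.
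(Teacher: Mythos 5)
Your proof is correct and follows essentially the same route as the paper: iterate the preceding Weyl commutation lemma once per monomial and then sum against the coefficients $q_l$ by linearity. You merely make explicit, via induction and the support count $l+m\le n-1$, the bookkeeping that the paper's proof compresses into the remark that $S^l e_k = e_{k+l}$ with $k+l\le n$.
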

\begin{proof}
For each monomial, this is true by iterating the commutation relation in the preceding lemma,
because $S^l e_k = e_{k+l}$ and if $k, l \le m$, then $k+l \le n$. The statement for polynomials follows
by linearity.
\end{proof}

\begin{thm}
Let $\Lambda, \Omega>0$, $n \in \mathbb N$, $\{t_1, t_2, \dots, t_s\}\subset [0,\Lambda]$, and for each $j \in \{1,2, \dots, s\}$, 
$z_j  = e^{-2\pi i t_j h}$ with $h=\Omega/n$.
Let $T= \sum_{j=1}^s c_j M_{z_j}$ on $\ell^2(\{1,2,\dots, n\})$ with 
$n\ge s+m$, $m>(1 + 6 /\ln(s/\Lambda\Omega)) s$ and
non-vanishing complex coefficients $\{c_1, c_2, \dots, c_s\}$, then there is a unique monic polynomial $q$ of degree $s$ such that
for each $f\in \ell^2(\{1,2, \dots, n\})$ with support in $\{1,2, \dots, m\}$, 
$$
   \Psi_q(T) f = \sum_{l=0}^s q_l (S^l)^* T S^l f  = 0 \, .
$$
\end{thm}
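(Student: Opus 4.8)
The plan is to build the polynomial $q$ as the one whose roots are exactly the values $z_1, z_2, \dots, z_s$, and then verify both that it annihilates $T$ in the stated sense and that it is the unique monic degree-$s$ polynomial doing so. Concretely, first I would set $q(z) = \prod_{j=1}^s (z - z_j)$. Since the $t_j$ are distinct points in $[0,\Lambda]$ and $h=\Omega/n$ with $\Omega\Lambda<1/2$, the numbers $2\pi t_j h$ lie in a window of length less than $\pi$, so the $z_j$ are pairwise distinct points on the unit circle; hence $q$ is a genuine monic polynomial of degree exactly $s$ with $q(z_j)=0$ for every $j$.

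For the annihilation property, I would invoke the previous lemma on $\Psi_q(M_z)$. The hypotheses there require $q$ to have degree at most $n-m$; since $n \ge s+m$ we have $\deg q = s \le n-m$, so the lemma applies to each $M_{z_j}$. Thus for any $f$ supported in $\{1,2,\dots,m\}$,
$$
\Psi_q(T) f = \sum_{j=1}^s c_j \Psi_q(M_{z_j}) f = \sum_{j=1}^s c_j\, q(z_j)\, M_{z_j} f = 0,
$$
using linearity of $\Psi_q$ in the first equality and $q(z_j)=0$ in the last. This establishes existence.

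For uniqueness, suppose $p$ is another monic polynomial of degree $s$ with $\Psi_p(T) f = 0$ for all $f$ supported in $\{1,2,\dots,m\}$. Applying the $\Psi_p(M_z)$ lemma again (its degree hypothesis is still met), we get $\sum_{j=1}^s c_j\, p(z_j)\, M_{z_j} f = 0$ for all such $f$. I would then argue that this forces $\sum_{j=1}^s c_j p(z_j) M_{z_j} = 0$ as an operator: the constraint that $f$ be supported in $\{1,2,\dots,m\}$ only restricts which columns we see, but the diagonal entries of $M_{z_j}$ in positions $1,\dots,m$ are the exponentials $z_j^k$, $k=1,\dots,m$, and since $m>(1+6/\ln(s/\Lambda\Omega))s$, Tijdeman's bound (the root-counting lemma) shows that these exponential sequences, restricted to $m$ consecutive indices, remain linearly independent — this is exactly the mechanism in the linear-independence lemma for modulation operators, now applied on $\{1,\dots,m\}$ rather than $\{1,\dots,n\}$. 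Hence $c_j p(z_j) = 0$ for each $j$, and since the $c_j$ are non-vanishing, $p(z_j)=0$ for all $j$. A monic degree-$s$ polynomial vanishing at the $s$ distinct points $z_1,\dots,z_s$ must equal $q$, giving uniqueness.

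The main obstacle I anticipate is the bookkeeping in the uniqueness step: one must be careful that "$\Psi_p(T)f=0$ for all $f$ supported in $\{1,\dots,m\}$" genuinely yields "$c_j p(z_j)=0$ for all $j$" and not merely some weaker averaged statement. The clean way is to note that $\sum_j c_j p(z_j) M_{z_j}$ is a diagonal operator whose $k$-th diagonal entry is $\sum_j c_j p(z_j) z_j^k$, that the hypothesis forces these entries to vanish for $k=1,\dots,m$, and that a nontrivial such vanishing linear combination of $m$ consecutive samples of $s$ distinct complex exponentials would contradict the root-count bound since $m$ exceeds the maximal number of roots of a length-$s$ exponential sum on an interval of this size — so the coefficients $c_j p(z_j)$ must all be zero. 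Everything else is a direct application of the lemmas already proved.
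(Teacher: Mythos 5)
Your proposal is correct and follows essentially the same route as the paper: existence via the factored polynomial $q(z)=\prod_{j=1}^s(z-z_j)$ together with the lemma $\Psi_q(M_{z_j})f=q(z_j)M_{z_j}f$, and uniqueness by forcing $c_j p(z_j)=0$ from the linear independence of the exponential sequences and $c_j\neq 0$. Your extra care in the uniqueness step—observing that the vanishing is only known on the diagonal entries $k=1,\dots,m$ and that $m>(1+6/\ln(s/\Lambda\Omega))s$ is exactly what makes Tijdeman's root count apply there—is a slightly more precise version of the paper's appeal to the linear-independence lemma, not a different argument.
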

\begin{proof}
We note that by the lemma, $\sum_{l=0}^s q_l (S^l)^* T S^l f  = \sum_{j=1}^s q(z_j) c_j M_{z_j} f$.
If $q(z) = \prod_{j=1}^s (z-z_j)$, then indeed $q(z_j)=0$ and the statement is true. Moreover,
requiring the degree of $q$ to be $s$, yields only one choice of a monic polynomial that annihilates $T$. By the linear independence
of $\{M_{z_j}\}_{j=1}^s$ and $c_j \ne 0$, if $\sum_{j=1}^s q(z_j) c_j M_{z_j} f = 0$ for each $f$ with support of size $m$, then $q(z_j)=0$ for each
$j \in \{1, 2, \dots, s\}$. This forces $q$ to be the (monic) polynomial given by the factored form.
\end{proof}

The uniqueness of the annihilating monic polynomial can be formulated in the usual form of the Prony method.
The reason why we have used the operator formulation with modulations is its convenience for 
deriving the relation between $s$ and $n$.

\begin{cor}
Under the assumptions of the preceding theorem, the solution of the homogeneous linear system
given by the equations for each $k \in \{1,2,\dots m\}$
$$
   \sum_{l=0}^s q_l \sum_{j=1}^s c_j z_j^{k+l}  = 0
$$
satisfying $q_s=1$ gives the unique monic polynomial $q(z) = \prod_{j=1}^s (z-z_j)$.
\end{cor}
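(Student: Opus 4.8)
The plan is to unpack the operator identity from the preceding theorem into its scalar form by testing $\Psi_q(T)f = 0$ against a suitable family of vectors $f$. The natural choice is $f = e_k$ for $k \in \{1, 2, \dots, m\}$, since each such $e_k$ has support in $\{1, 2, \dots, m\}$, so the theorem applies and guarantees $\Psi_q(T)e_k = 0$ for the unique monic degree-$s$ polynomial $q(z) = \prod_{j=1}^s(z - z_j)$. So the first step is simply to compute $\Psi_q(T)e_k$ explicitly using the definitions already in hand.

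Concretely, I would recall that $S^l e_k = e_{k+l}$ (valid since $k + l \le m + s \le n$), that $M_{z_j} e_{k+l} = z_j^{k+l} e_{k+l}$, and that $(S^l)^* e_{k+l} = e_k$. Hence each summand $q_l (S^l)^* T S^l e_k = q_l (S^l)^* \bigl(\sum_{j=1}^s c_j z_j^{k+l} e_{k+l}\bigr) = q_l \sum_{j=1}^s c_j z_j^{k+l} e_k$. Summing over $l$ from $0$ to $s$ gives $\Psi_q(T) e_k = \Bigl(\sum_{l=0}^s q_l \sum_{j=1}^s c_j z_j^{k+l}\Bigr) e_k$. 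Since this must vanish and $e_k \ne 0$, the scalar coefficient $\sum_{l=0}^s q_l \sum_{j=1}^s c_j z_j^{k+l}$ is zero for each $k \in \{1, 2, \dots, m\}$, which is exactly the claimed homogeneous linear system. This establishes that the polynomial $q(z) = \prod_{j=1}^s(z-z_j)$, with $q_s = 1$, is a solution satisfying the normalization $q_s = 1$.

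For uniqueness within the solution set of this linear system, I would argue that any solution $(q_0, \dots, q_s)$ with $q_s = 1$ defines a monic degree-$s$ polynomial $\tilde q$, and running the computation above in reverse shows $\Psi_{\tilde q}(T) e_k = 0$ for all $k \le m$, hence $\Psi_{\tilde q}(T)f = 0$ for every $f$ supported in $\{1, \dots, m\}$ by linearity. The uniqueness clause of the preceding theorem then forces $\tilde q = q$. Alternatively, one can observe directly from the lemma that $\Psi_{\tilde q}(T) f = \sum_{j=1}^s \tilde q(z_j) c_j M_{z_j} f$, and invoke the linear independence of $\{M_{z_j}\}_{j=1}^s$ together with $c_j \ne 0$ to conclude $\tilde q(z_j) = 0$ for all $j$, pinning down $\tilde q$ as the monic polynomial with those $s$ roots.

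I do not anticipate a serious obstacle here: this corollary is essentially a notational translation of the operator-level theorem into the standard Prony linear system, and every ingredient — the action of shifts and modulations on basis vectors, the degree bookkeeping $k + l \le n$, and the linear independence lemma — has already been proved. The one point to be slightly careful about is the index range: one needs $m + s \le n$ so that $S^l e_k$ stays within the index set $\{1, \dots, n\}$ and the formula $(S^l)^* S^l e_k = e_k$ holds without cyclic wraparound; this is guaranteed by the standing hypothesis $n \ge s + m$ inherited from the theorem. Making sure the reader sees that $k$ ranges only up to $m$ (not $n$) is the only place where mild attention is required.
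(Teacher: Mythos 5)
Your proposal is correct and follows essentially the same route as the paper, which proves the corollary by choosing $f = e_k$ for $k \in \{1,2,\dots,m\}$ and unwinding the definition of $T$; you simply spell out the computation and the uniqueness step (via the linear independence of the $M_{z_j}$ and $c_j \ne 0$) that the paper leaves implicit. No gaps.
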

\begin{proof}
This follows from the definition of $T$ and from the choice $f=e_k$, $k \in \{1,2, \dots, m\}$.
\end{proof}

If $\hat\mu(v_k) $ is known for each $k \in \{1,2, \dots, m+s\}$, then the Prony method can be applied to extract 
$t_j$ and $c_j$. However, guaranteeing that phase propagation gives all the values seems to require
assumptions on $\Gamma_\mu$.
In order to avoid such implicit assumptions, we hereafter include an additional 
inverse Fourier transform and embed the graph in the signal domain so that missing data can be reconstructed
through oversampling.

Let 
$$
   \tilde \mu(t) = \sum_{k=1}^m \hat \mu(k\Omega/m) e^{i \pi t k/\Lambda} \, .
$$
We note that among all $2\Lambda$-periodic trigonometric polynomials of degree $m$, $\tilde \mu$ has $m+1$ vanishing coefficients, meaning we have implicitly set
$\hat \mu(k\Omega/m)=0$ if $k \in \{-m, -m+1, \dots, -1, 0\}$. 
To quantify the improvement from this inverse Fourier transform, we appeal to Terry Tao's  
strengthened uniform uncertainty principle.

\begin{lemma}[Corollary 1.4 in \cite{tao05}]
If $G=\mathbb{Z}_p$ and $p$  is prime, and $f \mapsto \hat f$ denotes the Fourier transform on $G$,
so $\hat f(\xi) = \frac{1}{p} \sum_{x \in G} f(x) e^{2 \pi i x \xi/p}$, then if $S, \tilde S \subset G$,
$F: \ell^2(S) \to \ell^2(\tilde S)$ is invertible, where $F f(\xi) = \hat f(\xi)$ and $\ell^2(S)$ denotes the functions on $G$ vanishing 
on $G\setminus S$.
\end{lemma}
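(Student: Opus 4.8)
The plan is to deduce this invertibility statement from Tao's uniform uncertainty principle on $\mathbb{Z}_p$, namely that a nonzero $f:\mathbb{Z}_p\to\mathbb{C}$ satisfies $|\operatorname{supp} f|+|\operatorname{supp}\hat f|\ge p+1$, which in turn rests on Chebotarev's theorem: for $p$ prime, every square submatrix of the Fourier matrix $\bigl(\omega^{x\xi}\bigr)_{x,\xi\in\mathbb{Z}_p}$, with $\omega=e^{2\pi i/p}$, is nonsingular. Writing $k:=|S|=|\tilde S|$ (equality of cardinalities is what makes invertibility meaningful), the map $F$ goes between spaces of equal finite dimension $k$, so it suffices to prove injectivity; bijectivity, and hence an inverse obtained by solving a square linear system, then follows automatically.

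First I would record the uncertainty principle as a consequence of Chebotarev's theorem. Suppose $f$ is supported on a set $A$ with $a:=|\operatorname{supp} f|\le|A|$, and suppose for contradiction that $\hat f$ vanishes on a set $B$ with $|B|\ge a$; choose $B'\subseteq B$ with $|B'|=a$. The equations $\hat f(\xi)=\frac1p\sum_{x\in A}f(x)\omega^{x\xi}=0$ for $\xi\in B'$ form a homogeneous linear system in the unknowns $\bigl(f(x)\bigr)_{x\in A}$ whose coefficient matrix is the $a\times a$ submatrix $\bigl(\omega^{x\xi}\bigr)_{\xi\in B',\,x\in A}$ of the Fourier matrix. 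By Chebotarev this matrix is invertible, forcing $f\equiv 0$, a contradiction. Hence $\hat f$ vanishes at no more than $a-1$ points, i.e. $|\operatorname{supp}\hat f|\ge p-(a-1)=p+1-a$, as claimed.

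Now I would finish the corollary. Take $f\in\ell^2(S)$ with $f\ne 0$; then $|\operatorname{supp} f|\le|S|=k$, so by the uncertainty principle $|\operatorname{supp}\hat f|\ge p+1-k$, and therefore $\hat f$ vanishes at most $k-1$ of the $p$ elements of $\mathbb{Z}_p$. Since $|\tilde S|=k>k-1$, the restriction $\hat f|_{\tilde S}=Ff$ cannot be identically zero, so $\ker F=\{0\}$; as $\dim\ell^2(S)=\dim\ell^2(\tilde S)<\infty$, $F$ is a bijection. The genuinely nontrivial ingredient here is Chebotarev's non-vanishing-minors theorem — everything past it is a two-paragraph reduction and a dimension count. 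If one is content to invoke Tao's Corollary 1.4 (or his main theorem) as a black box, no further work is needed; for a self-contained account one would have to include a proof of Chebotarev's result, for instance the Galois-theoretic argument or the lowest-order-coefficient polynomial argument given in \cite{tao05}, and that is where the real obstacle lies.
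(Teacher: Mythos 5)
The paper does not actually prove this lemma --- it is imported verbatim as Corollary 1.4 of Tao's paper \cite{tao05} --- so there is no internal proof to compare against; what can be said is that your derivation is correct in substance and retraces Tao's own route: Chebotarev's theorem (every square submatrix of the $p\times p$ Fourier matrix $(\omega^{x\xi})$ is nonsingular for $p$ prime) yields the uncertainty principle $|\operatorname{supp} f|+|\operatorname{supp}\hat f|\ge p+1$, and injectivity of $F$ together with a dimension count gives bijectivity. Two remarks. First, once you have supplied the implicit hypothesis $|S|=|\tilde S|$ (which the paper's statement omits, and which you correctly flag), the uncertainty-principle detour is not needed: in the canonical bases the matrix of $F$ is exactly $\frac1p\bigl(\omega^{x\xi}\bigr)_{\xi\in\tilde S,\,x\in S}$, itself a square submatrix of the Fourier matrix, so Chebotarev gives invertibility in one line; your longer route does, however, also deliver the injectivity statement for $|\tilde S|\ge|S|$, which is what the paper's subsequent oversampling corollary really uses. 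Second, a small bookkeeping slip: with $a=|\operatorname{supp} f|\le|A|$ the system you write down has $|A|$ unknowns and $a$ equations, so the coefficient matrix $\bigl(\omega^{x\xi}\bigr)_{\xi\in B',\,x\in A}$ is $a\times a$ only when $A=\operatorname{supp} f$; take $A$ to be the support (the omitted terms vanish anyway) and the Chebotarev argument closes as you intend. Neither point affects the validity of the plan, and you are right that the only genuinely nontrivial ingredient is Chebotarev's theorem, which you, like the paper, treat as a black box.
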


As a consequence, we note that evaluating the function $\tilde \mu$ at any subset of (at least) $m$ vertices
from $\{v_k\}_{k=1}^{2m+1}$ with $v_k = 2 k \Lambda/(2m+1)$
determines $\{\hat \mu (k \Omega/m)\}_{k=1}^m$, if $2m+1$ is prime.

\begin{cor}
Let $\mu$ and $\tilde \mu$ be as described, then for any subset $\tilde S \subset \{1, 2, \dots, n\}$, $n=2m+1$ prime,
having size $|\tilde S| \ge m$,
the linear re-sampling operator $\Psi: (\hat\mu(k\Omega/m))_{k=1}^m \mapsto (\tilde \mu(2 k\Lambda/n))_{k \in \tilde S}$ is 
invertible.
\end{cor}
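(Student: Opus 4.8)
The plan is to deduce this corollary directly from the preceding lemma (Tao's strengthened uncertainty principle, Corollary 1.4 in \cite{tao05}) by an appropriate change of variables. First I would set up the dictionary between the two pictures. The function $\tilde\mu(t) = \sum_{k=1}^m \hat\mu(k\Omega/m)\, e^{i\pi t k/\Lambda}$ is a $2\Lambda$-periodic trigonometric polynomial whose nonzero frequency content is supported on $\{1,2,\dots,m\}$, and it is being sampled at the nodes $t = 2k\Lambda/n$ for $k \in \tilde S$, with $n = 2m+1$. Substituting $t = 2k\Lambda/n$ into the exponential gives $e^{i\pi(2k\Lambda/n)j/\Lambda} = e^{2\pi i k j/n}$, so $\tilde\mu(2k\Lambda/n) = \sum_{j=1}^m \hat\mu(j\Omega/m)\, e^{2\pi i kj/n}$. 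Thus, up to the normalization factor $1/p$ in the lemma's convention, the map $\Psi$ is exactly the restriction to $\tilde S$ of the discrete Fourier transform on $G = \mathbb{Z}_n$ of the sequence $g \in \ell^2(\mathbb{Z}_n)$ defined by $g(j) = \hat\mu(j\Omega/m)$ for $j \in S := \{1,2,\dots,m\}$ and $g(j) = 0$ otherwise.

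Next I would invoke the lemma with $G = \mathbb{Z}_p$, $p = n = 2m+1$ (prime by hypothesis), $S = \{1,2,\dots,m\}$, and the given $\tilde S \subset \{1,2,\dots,n\} \cong \mathbb{Z}_n$ with $|\tilde S| \ge m = |S|$. The lemma asserts that $F\colon \ell^2(S) \to \ell^2(\tilde S)$, $Ff(\xi) = \hat f(\xi)$, is invertible. Since the normalization constant $1/p$ and the sign in the exponent only amount to composing $F$ with invertible diagonal operators (and a relabeling $\xi \mapsto -\xi$, which is a bijection of $\mathbb{Z}_p$ and hence permutes the possible target sets, but note we may equally apply the lemma with $\tilde S$ replaced by $-\tilde S$, which has the same cardinality), the operator $\Psi$ differs from $F$ only by invertible factors. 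Hence $\Psi$ is invertible, which is the claim.

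The one point requiring a little care — and the step I expect to be the main (minor) obstacle — is matching conventions: the lemma's Fourier transform is $\hat f(\xi) = \frac1p\sum_x f(x) e^{2\pi i x\xi/p}$, whereas the substitution above produces $e^{+2\pi i kj/n}$ without the $1/p$ factor and with the roles of "space" and "frequency" variables to be tracked carefully. I would verify that (i) $g$ indeed lies in $\ell^2(S)$ with $|S| = m$, so the hypothesis $|\tilde S| \ge |S|$ is the relevant one; (ii) whether the exponent sign forces us to apply the lemma to $-\tilde S$ instead of $\tilde S$ is immaterial since $|-\tilde S| = |\tilde S| \ge m$ and the statement is "for any subset"; and (iii) the omitted frequency $j=0$ and the implicit vanishing of $\hat\mu(k\Omega/m)$ for $k \le 0$ are consistent with $g$ being supported exactly on $\{1,\dots,m\}$. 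Once the conventions are aligned, invertibility of $\Psi$ is immediate from invertibility of $F$.
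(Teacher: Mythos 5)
Your proposal is correct and matches the intended argument: the paper presents this corollary as an immediate consequence of Tao's lemma, exactly via the substitution $t=2k\Lambda/n$ turning $\tilde\mu(2k\Lambda/n)$ into the $\mathbb{Z}_n$-Fourier transform of the sequence $j\mapsto\hat\mu(j\Omega/m)$ supported on $\{1,\dots,m\}$, so your change of variables plus the invertibility of $F:\ell^2(S)\to\ell^2(\tilde S)$ is precisely the intended proof. The only caveat, inherited from the paper's own loose statement of the lemma, is that for $|\tilde S|>m$ one should read ``invertible'' as injective (or restrict to any size-$m$ subset of $\tilde S$), since Tao's Corollary 1.4 is stated for $|S|=|\tilde S|$.
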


 \begin{cor}
Let $\Lambda, \Omega>0$, $\Omega>0$, $\Lambda \Omega < 1/2$ and 
$\mu$ be a complex measure having support $\{t_1, t_2, \dots , t_s\}\subset [0,\Lambda]$.
If $n> \frac{6d(1+6/\ln(s/\Lambda\Omega))s}{d - 2 \sqrt{d-1}}$ and $n$ is prime, $d\ge 3$ 
%
and $V=\{v_0,v_1, v_2, \dots, v_n\}$ 
with $v_k = h k$ and $h = 2 \Lambda/n$
for each $v_k$, 
then phase propagation applied to the oversampled measurement $\{\mathcal{M}_{i,\Gamma}(\tilde\mu)\}_{i=0}^2$
with a Ramanujan graph $\Gamma$ of degree $d$ having the vertex set $V$
provides values $\zeta\hat \mu(2k\Omega/(n+1))$ for each $k \in \{1, 2, \dots, (n+1)/2\}$ 
with a remaining residual unknown $\zeta \in \mathbb C$, $|\zeta|=1$. Moreover, 
the unique monic polynomial $q$ of degree $s$ satisfying the homogeneous system of
equations for $k \in \{1,2, \dots, s+1\}$ 
$$
   \sum_{l=0}^s q_l \hat \mu (2(k+l)\Omega/(n-1))   = 0
$$
factors into $q(z) = \prod_{j=1}^s (z-z_j)$
with $z_j = e^{-2\pi i t_j \Omega/n}$, and the system $\sum_{j=1}^s c_j z_j^k = c \hat \mu(v_k)$
with $k \in \{1, 2, \dots, n\}$ has a unique solution $c$.
\end{cor}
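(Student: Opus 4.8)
The plan is to chain together the pieces already developed: the Ramanujan expansion bound, the phase‑propagation lemma (the $aa^{*}=bb^{*}$ lemma), the Chebotarev/Tao re‑sampling corollary, and the Prony uniqueness theorem together with the linear‑independence‑of‑modulations corollary. At a high level, phase propagation recovers $\tilde\mu$ up to a single unimodular factor on a large connected subgraph; re‑sampling converts this into the frequency samples $\hat\mu(k\Omega/m)$ up to the same factor; Prony extracts the nodes $z_j$ (hence the $t_j$) from those samples, which is insensitive to the factor; and a final Vandermonde‑type solve returns the coefficients, i.e.\ a representative of $[\mu]$.

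\emph{Phase propagation.} First I would bound the number of vertices where $\tilde\mu$ vanishes. Writing $w=e^{i\pi t/\Lambda}$, the trigonometric polynomial $\tilde\mu$ is $w$ times a polynomial in $w$ of degree $m-1$, so it has at most $m-1$ zeros on the unit circle and hence vanishes at at most $m-1$ of the vertices $v_k$; equivalently this is Tao's uniform uncertainty principle for the DFT on $\mathbb{Z}_n$ ($n$ prime) applied to the zero‑padded frequency vector $(\hat\mu(k\Omega/m))_{k=1}^m$. With $m$ of the order of $(1+6/\ln(s/\Lambda\Omega))s$, forming the induced subgraph $\Gamma_{\tilde\mu}$ deletes at most $m-1$ vertices, hence fewer than $d(m-1)\le(\tfrac d6-\tfrac{\sqrt{d-1}}{3})n$ edges by the hypothesis on $n$; the proposition on Ramanujan graphs then leaves a connected component $K$ of $\Gamma_{\tilde\mu}$ with $|K|\ge\tfrac23 n\ge m$. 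Fixing the representative of $[\mu]$ by setting $\tilde\mu(v_0)=|\tilde\mu(v_0)|$ at one $v_0\in K$ and iterating the $aa^{*}=bb^{*}$ lemma along $K$, the measurement $\{\mathcal{M}_{i,\Gamma}(\tilde\mu)\}_{i=0}^2$ pins down $\zeta\tilde\mu(v_k)$ for every $v_k\in K$ with a single residual $\zeta\in\mathbb C$, $|\zeta|=1$. Since $|K|\ge m$ and $n$ is prime, the re‑sampling estimate (Chebotarev's theorem, via Tao's uncertainty principle) makes $\Psi$ restricted to the index set of $K$ injective, so inverting it on $(\zeta\tilde\mu(v_k))_{v_k\in K}$ returns, by linearity, $\zeta\big(\hat\mu(k\Omega/m)\big)_{k=1}^m$.

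\emph{Prony and coefficients.} Now $\hat\mu(k\Omega/m)=\sum_{j=1}^s c_j z_j^{\,k}$ with $z_j=e^{-2\pi i\Omega t_j/m}$ distinct, and since $\Omega t_j/m\le\Omega\Lambda/m<1/2$ the assignment $t_j\mapsto z_j$ is injective, so once the $z_j$ are known the $t_j\in[0,\Lambda]$ are read off by unwrapping. The global factor replaces $c_j$ by $\zeta c_j$ but leaves the nodes $z_j$ unchanged, so the uniqueness theorem for the monic annihilating polynomial applies verbatim to the measured sequence: provided $m$ is large enough to supply the $s+1$ shifted equations and to exceed the linear‑independence threshold $(1+6/\ln(s/\Lambda\Omega))s$, the stated homogeneous system has the unique monic solution $q(z)=\prod_{j=1}^s(z-z_j)$. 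Factoring $q$ gives the $z_j$, hence the $t_j$; with the nodes fixed, the linear system $\sum_{j=1}^s c_j z_j^{\,k}=\zeta\,\hat\mu(k\Omega/m)$ is overdetermined and, the $z_j$ being distinct, has the unique solution $\zeta c$ by the linear‑independence‑of‑modulations corollary (equivalently a Vandermonde argument). This produces the representative $\zeta\mu$, hence $[\mu]$.

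\emph{Main obstacle.} The sensitive point is the balance inside the phase‑propagation step: one must be certain that the number of vertices at which $\tilde\mu$ vanishes stays of order $(1+6/\ln(s/\Lambda\Omega))s$ rather than of order $m$, so that the Ramanujan expansion bound still leaves a component of size at least $m$. This is exactly what fixes the relationship between the number $m$ of frequency samples, the graph size $n$, and the numerical constant in the hypothesis on $n$, and it is the only place where anything beyond bookkeeping is needed. Everything after phase propagation — re‑sampling, Prony, and the concluding linear solve — is formal given the lemmas already established and is unaffected by the residual factor $\zeta$.
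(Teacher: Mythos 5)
Your overall architecture (phase propagation on the induced subgraph, Tao/Chebotarev re-sampling to recover the frequency samples up to $\zeta$, Prony for the nodes, Vandermonde solve for the coefficients) is the same as the paper's, and the re-sampling, Prony, and coefficient steps are handled essentially as in the paper's proof. The problem is in your phase-propagation step, and it is a concrete numerical failure rather than a stylistic difference. In this corollary the trigonometric polynomial $\tilde\mu$ is built from $m=(n+1)/2$ frequency samples $\hat\mu(k\Omega/m)$, $k=1,\dots,m$ (this is forced by the conclusion, which recovers $(n+1)/2$ values of $\hat\mu$ at spacing $2\Omega/(n+1)=\Omega/m$, and is stated explicitly in the paper's proof). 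Your claim that ``$m$ is of the order of $(1+6/\ln(s/\Lambda\Omega))s$'' conflates this $m$ with the Prony window length $m'=(1+6/\ln(s/\Lambda\Omega))s$; in fact $m\approx n/2$, which for $d=3$ is roughly $50$ times larger than $(1+6/\ln(s/\Lambda\Omega))s$. Consequently your chain ``at most $m-1$ vanishing vertices, hence fewer than $d(m-1)\le \bigl(\tfrac d6-\tfrac{\sqrt{d-1}}{3}\bigr)n$ removed edges'' is false: $d(m-1)\approx dn/2$, whereas the Ramanujan budget $\bigl(\tfrac d6-\tfrac{\sqrt{d-1}}{3}\bigr)n$ is below $dn/6$ (for $d=3$ it is only about $0.03\,n$ edges, i.e.\ about $0.01\,n$ vertices). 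So the degree/uncertainty bound of $m-1\approx n/2$ possible zeros of $\tilde\mu$ on the grid does not keep you within the edge-removal threshold, and the existence of a connected component of size $\ge\tfrac23|V|$ in $\Gamma_{\tilde\mu}$ is not established by your argument. The hypothesis $n>\tfrac{6d(1+6/\ln(s/\Lambda\Omega))s}{d-2\sqrt{d-1}}$ only covers a zero count of order $(1+6/\ln(s/\Lambda\Omega))s$ (the Tijdeman bound for $\hat\mu$ itself, as used in Theorem~\ref{thm:main1}); it does not cover a zero count of order $n$.

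You did in fact identify this as the ``sensitive point'' in your closing paragraph, but your proposed resolution (the degree bound $m-1$ together with the misidentification of $m$) does not close it. For comparison, the paper's own proof is very terse here: it simply asserts that phase propagation yields consistent values on a component $K$ with $|K|>2|V|/3$ ``by the expander property,'' without bounding the number of vertices at which $\tilde\mu$ vanishes; so the step you are trying to justify is exactly the one the paper leaves implicit, and any complete argument would need a bound on the vanishing set of $\tilde\mu$ at the grid points that is of order $(1+6/\ln(s/\Lambda\Omega))s$ (or at least below $\bigl(\tfrac16-\tfrac{\sqrt{d-1}}{3d}\bigr)|V|$), not the generic bound $m-1$ you invoke. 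Everything downstream of that point in your write-up (invertibility of the re-sampling map since $|K|\ge m$ and $n$ is prime, insensitivity of the Prony system to $\zeta$, uniqueness of the monic annihilator, and the final linear solve) is correct and matches the paper.
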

\begin{proof}
The assumptions for phase propagation and the Prony method need to be combined.
We note that if the number of vertices in the Ramanujan graph is $n+1$ with $n>\frac{6(1+6/\ln(s/\Lambda\Omega))s}{1 - 2 \sqrt{d-1}/d}$, then phase propagation
provides a consistent set of values $\{\zeta\tilde \mu(v_k): v_k \in K\}$ with a residual unknown factor $\zeta \in \mathbb C$,
$|\zeta|=1$. By the expander property, $|K|> 2|V|/3 = 2(n+1)/3$.
If $n$ is prime, using the fact that the linear map from $(\hat \mu(k \Omega/m)_{k=1}^m$ with $m=(n+1)/2$ to $(\tilde\mu(v_k))_{v_k \in K}$
is invertible, we obtain sample values for $\hat \mu$. Next,
 $$(n+1)/2> \frac{3(1+6/\ln(s/\Lambda\Omega))s}{1-2 \sqrt{d-1}/d} > 3(1+6/\ln(s/\Lambda\Omega))s$$ hence $(n+1)/2>m+s$ with 
 $m=(1 + 6 /\ln(s/\Lambda\Omega)) s$.
This implies that the Prony method can be applied to find the polynomial whose roots are $\{e^{-2\pi i t_j}\}_{j=1}^s$.
Knowing the support of $\mu$ then allows to solve for the coefficient vector $c$.
\end{proof}

As mentioned previously, a family of expander graphs can be obtained with the size $2^{r+1} d$. If we choose $d=4$,
 then there are examples of $ n-1 = 2^{r+3} -1$ being (Mersenne) primes. Fortunately, there are constructions for more
 general sizes of graphs as well \cite{marcusetal_2015}, see also \cite{MORGENSTERN199444,Cohen2016}.

%


%


\begin{thebibliography}{10}

\bibitem{alexeev_bandeira_fickus_mixon_2014}
Boris Alexeev, Afonso~S. Bandeira, Matthew Fickus, and Dustin~G. Mixon,
  \emph{Phase retrieval with polarization}, SIAM Journal on Imaging Sciences
  \textbf{7} (2014), no.~1, 35–66.

\bibitem{Balan:2009fk}
Radu~Balan, Bernhard~G. Bodmann, Peter~G. Casazza, and Dan~Edidin, \emph{Painless
  reconstruction from magnitudes of frame coefficients}, J.~ Fourier Anal.~
  Appl. \textbf{15} (2009), no.~4, 488--501.

\bibitem{balan2006signal}
Radu Balan, Pete Casazza, and Dan Edidin, \emph{On signal reconstruction
  without phase}, Applied and Computational Harmonic Analysis \textbf{20}
  (2006), no.~3, 345--356.

\bibitem{beinert_plonka_2017}
Robert Beinert and Gerlind Plonka, \emph{Sparse phase retrieval of structured
  signals by prony's method}, PAMM \textbf{17} (2017), no.~1, 829–830.

\bibitem{bodmann_hammen_2015}
Bernhard~G. Bodmann and Nathaniel Hammen, \emph{Stable phase retrieval with
  low-redundancy frames}, Advances in computational mathematics \textbf{41}
  (2015), no.~2, 317--331.

\bibitem{bodmann_hammen_2017}
Bernhard~G. Bodmann and Nathaniel Hammen, \emph{Algorithms and error bounds for
  noisy phase retrieval with low-redundancy frames}, Applied and Computational
  Harmonic Analysis \textbf{43} (2017), no.~3, 482–503.

\bibitem{cahill2016phase}
Jameson Cahill, Peter Casazza, and Ingrid Daubechies, \emph{Phase retrieval in
  infinite-dimensional hilbert spaces}, Transactions of the American
  Mathematical Society, Series B \textbf{3} (2016), no.~3, 63--76.

\bibitem{Candes:2012fk}
Emmanuel~J. Cand{\`e}s and Xiadong~Li., \emph{Solving quadratic equations via
  {P}hase{L}ift when there are about as many equations as unknowns},
  Foundations of Computational Mathematics \textbf{14} (2014), 1017--1026.

\bibitem{Candes:uq}
Emmanuel~J. Cand{\`e}s, Thomas~Strohmer, and Vladislav~Voroninski, \emph{Phase{L}ift: Exact and
  stable signal recovery from magnitude measurements via convex programming},
  Communications on Pure and Applied Mathematics, DOI:10.1002/cpa.21432
  \textbf{66} (2013), no.~8, 1241--1274.

\bibitem{CandesFernandezGranda2014}
Emmanuel J. Candes and Carlos Fernandez-Granda, \emph{Towards a mathematical theory of
  super-resolution}, Communications on Pure and Applied Mathematics \textbf{67}
  (2014), no.~6, 906--956.

\bibitem{Cohen2016}
Michael~B. Cohen, \emph{Ramanujan graphs in polynomial time}, 2016 IEEE 57th
  Annual Symposium on Foundations of Computer Science (FOCS), 2016,
  pp.~276--281.

\bibitem{ConcaEdidinEtAl:2015}
Aldo Conca, Dan Edidin, Milena Hering, and Cynthia Vinzant, \emph{An algebraic
  characterization of injectivity in phase retrieval}, Appl. Comput. Harmon.
  Anal. \textbf{38} (2015), no.~2, 346--356. \MR{3303679}

\bibitem{Donoho92}
David~L. Donoho, \emph{Superresolution via sparsity constraints}, SIAM Journal
  on Mathematical Analysis \textbf{23} (1992), no.~5, 1309--1331.

\bibitem{fienup_1982}
James~R. Fienup, \emph{Phase retrieval algorithms: A comparison}, Applied Optics
  \textbf{21} (1982), no.~15, 2758.

\bibitem{flammia2012quantum}
Steven~T Flammia, David Gross, Yi-Kai Liu, and Jens Eisert, \emph{Quantum
  tomography via compressed sensing: error bounds, sample complexity and
  efficient estimators}, New Journal of Physics \textbf{14} (2012), no.~9,
  095022.

\bibitem{Gross:2013fk}
David~Gross, Felix~Krahmer, and Richard~Kueng, \emph{A partial derandomization of
  {P}hase{L}ift using spherical designs}, J.~ Fourier Anal.~ Appl. \textbf{21}
  (2015), no.~2, 229--266.

\bibitem{gross17improved}
David~Gross, Felix~Krahmer, and Richard~Kueng, \emph{{Improved recovery guarantees for
  phase retrieval from coded diffraction patterns}}, Appl. Comput. Harmon.
  Anal. \textbf{42} (2017), no.~1, 37--64. \MR{3574560}

\bibitem{harsha_barth_2005}
Prahladh Harsha and Adam Barth, \emph{CS369e: Expanders in computer science -
  Lecture 5: Derandomization (part ii)}, May 2005.



\bibitem{kowalski_2018}
Emmanuel Kowalski, \emph{An introduction to expander graphs}, American
  Mathematical Society, Aug 2018.

\bibitem{marcus_spielman_srivastava_2015}
Adam Marcus, Daniel Spielman, and Nikhil Srivastava, \emph{Interlacing families
  {I}: Bipartite ramanujan graphs of all degrees}, Annals of Mathematics
  (2015), 307--325.

\bibitem{marcusetal_2015}
Adam Marcus, Daniel Spielman, and Nikhil Srivastava, \emph{Interlacing families
  {IV}: Bipartite ramanujan graphs of all sizes}, SIAM J. Comput. \textbf{47}
  (2018), no.~6, 2488--2509.

\bibitem{MORGENSTERN199444}
M.~Morgenstern, \emph{Existence and explicit constructions of q + 1 regular
  ramanujan graphs for every prime power q}, Journal of Combinatorial Theory,
  Series B \textbf{62} (1994), no.~1, 44--62.

\bibitem{DBLP:journals/corr/RanieriCLV13}
Gilles Baechler, Miranda Krekovi{\'c}, Juri Ranieri, Amina Chebira, Yue M. Lu and Martin Vetterli, "Super Resolution Phase Retrieval for Sparse Signals," in IEEE Transactions on Signal Processing, vol. 67 (2019)  no. 18, 4839--4854.

\bibitem{tao05}
Terry Tao, \emph{An uncertainty principle for cyclic groups of prime order},
  Mathematical Research Letters \textbf{12} (2005), 121--127.

\bibitem{tijdeman_1971}
Robert~Tijdeman, \emph{On the number of zeros of general exponential polynomials},
  Indagationes Mathematicae (Proceedings) \textbf{74} (1971), 1–7.

\bibitem{vinzant2015}
Cynthia Vinzant, \emph{A small frame and a certificate of its injectivity},
  2015 International Conference on Sampling Theory and Applications (SampTA)
  (2015), 197--200.

\bibitem{yan_wu_liu_2013}
Aimin~Yan, Xizeng~Wu, and Hong~Liu, \emph{A robust general phase retrieval method for
  medical applications}, Journal of Instrumentation \textbf{8} (2013), no.~05.

\end{thebibliography}
\end{document}